\documentclass{article}
\usepackage{marginnote}
\usepackage[english]{babel}
\usepackage{graphicx}
\usepackage{framed}
\usepackage[normalem]{ulem}
\usepackage{amsmath}
\usepackage{amsthm}
\usepackage{amssymb}
\usepackage{amsfonts}
\usepackage{enumerate}
\usepackage[utf8]{inputenc}
\usepackage{hyperref}
\usepackage{bibentry}
\usepackage[printwatermark]{xwatermark}
\usepackage{color}

\newtheorem{theorem}{Theorem}[section]
\newtheorem{corollary}{Corollary}[section]
\newtheorem{definition}{Definition}[section] 
\newtheorem{lemma}{Lemma}[section]

\newtheorem{remark}{Remark}
\newtheorem{assumption}{Assumption}

\title{A priori error analysis for one-parameter Lagrange and POD reduced order methods}

\author{Antti Hannukainen\thanks{Department of Mathematics and Systems Analysis, Aalto University. E-mail:antti.hannukainen@aalto.fi vigdis.toresen@aalto.fi} 
\and Vigdis Toresen\footnotemark[1] }

\date{\today}

\begin{document}

\maketitle
% REQUIRED
\begin{abstract}
This work deals with the numerical solution of parametric linear elliptic partial differential equations using the finite element method for spatial discretization and a proper orthogonal decomposition or Lagrange subspace method based ROM for treating the parameter dependency. We study a simple one-parameter model problem and prove an error estimate for these ROMs as a function of the snapshot point set. Sub-exponential convergence with respect to the number of snapshot points is established under very mild assumptions on the point set. Exponential convergence is proven for structured point sets. In contrast to existing error analysis of Lagrange subspace methods, our estimates are not based on interpolation or perturbation arguments. Rather, we define a new approximation operator from the snapshot subspace and prove error and stability estimates for it. 
\end{abstract}

% REQUIRED
\noindent {\bf Keywords:}
proper orthogonal decomposition, Lagrange subspace method, a priori error analysis, partial differential equations, snapshot approximation. \\

% REQUIRED
\noindent{\bf MSC subject classification:}
65N30, 65N15, 41A25 

\section{Introduction}
This work deals with the numerical solution of parametric partial differential equations, that is, PDEs with additional variables that represent varying material properties, geometry, or boundary conditions. Such equations arise, e.g., in uncertainty quantification \cite{BaNoTe:07}, design optimisation \cite{MaQuRo:12,LaRo:10}, and solution of inverse problems \cite{FaMaYoWiBl:11,LiWiGh:10}. These applications require multiple solutions of the parametric PDE that typically consume a large portion of the computational time.

We consider linear elliptic parametric PDEs and a solution strategy in which the problem is first discretised with respect to spatial variables using the finite element method (FEM). Then, a reduced order model (ROM) for the resulting parametric linear system is constructed using the proper orthogonal decomposition (POD) method \cite{KaVo:07} or the Lagrange subspace method. These ROMs are designed so that they can be evaluated much faster than the full model, thus speeding up computations that require several solutions of a parametric PDE. There are other approaches to achieve similar speed-up, e.g., one can apply the Galerkin method simultaneously in both spatial and parameter dimensions \cite{ScGi:11}, solve the linear system using collocation methods \cite{BaNoTe:07}, or use other snapshot-based approaches such as the greedy method \cite{CoDe:15}. 

It is natural to define a ROM for a linear elliptic PDE as a Galerkin projection to a low-dimensional reduced subspace that can approximate the parametric solution to the desired tolerance. The POD and Lagrange subspace methods create reduced subspaces by first solving the problem for a set of snapshot parameters. In the POD method, the singular value decomposition is then used to find a reduced subspace based on these snapshot solutions. In the Lagrange method, the subspace is defined as the span of the snapshot solutions. Both methods are easy to implement in existing finite element (FE) codes. If the parameter dependency is affine, the FE assembly only needs to be made once.

This article gives a priori error bounds for the Lagrange and POD methods as a function of the snapshot set. Such estimates are interesting as they can help in choosing the snapshots so that the error of the ROM is within a given tolerance while keeping the number of solution evaluations required to construct the model feasible. A priori error estimates for the Lagrange subspace method have been studied in \cite{MaPaTu:02} and for the greedy method in \cite{CoDe:15}. The POD has been analysed in \cite{KaVo:07}. There also exists error analysis for reduced basis methods in a more general non-linear setting, see \cite{Po:85,FiRh:83}. These earlier works are reviewed in Section~\ref{sec:review}. 

We consider a one-parameter model problem and estimate the error of the related POD or Lagrange ROM as a function of the snapshot point set in a weighted $L^2$-norm. The novelty of our analysis is a new method for approximating a function $f:(-1,1) \rightarrow \mathbb{R}^m$ as a weighted sum of snapshot solutions, i.e., $f(y) \approx \sum_{i=1}^N \alpha_i(y) f(y_i)$. Here, $\{y_i\}_{i=1}^N$ are snapshot points and  $\alpha_i:(-1,1)\rightarrow \mathbb{R}$ are the coefficient functions. We bound the approximation error by the product of a stability constant and the best degree-$n$ polynomial approximation error of $f$. The polynomial degree $n$ is determined by a condition on the snapshot point set. Namely, we assume that there exists a quadrature rule using snapshot points and positive weights that can integrate any degree $2n$ polynomial with relative accuracy smaller than one. Unlike the Lebesgue constant, which grows exponentially for equidistant point sets, the stability constant of our approximation grows at sub-linear rate $\sqrt{n}$ under mild assumptions on the point set. These approximation results are general and do not depend on the ROM setting. 

The earlier research on approximating functions based on their point evaluations on arbitrary point sets aims to approximate $f$ from a given subspace $V_m$ and relate the error to the best approximation error of $f$ from the same subspace $V_m$. The stability constant is controlled by increasing the number of point evaluations. In particular, \cite{An2022,An2025} considers polynomial approximation and \cite{Cohene2013,Cohen2017} allow $V_m$ to be any suitable subspace. Our approach is fundamentally different as we construct the approximation from the snapshot subspace, but relate the error to the best polynomial approximation of $f$ similar to Lagrange interpolation. The authors are not aware of any previous work that considers such estimates for arbitrary point sets. 

We apply the approximation operator to derive a priori error estimates for Lagrange and POD ROMs. In particular, we show that the error is bounded by the best degree $n$ polynomial approximation error of the parameter-to-solution map in a weighted $L^\infty$-norm.  We establish sub-exponential convergence of POD and Lagrange ROM with respect to the number of snapshot points under very mild assumptions on the point set. Exponential convergence is established for structured point sets.

In contrast to existing error estimates for POD and Lagrange ROMs, our error analysis is not based on classical Lagrange interpolation or perturbation arguments. Rather, we explicitly construct an approximation to the solution from the snapshot subspace and prove a stability estimate for it. The earlier results on approximating functions based on their pointwise values in \cite{An2022,Cohen2017,Cohene2013,An2025} also establish the $L^2$-approximations, but not as a linear combination of the snapshot solutions. Hence, these previous methods are not directly applicable to POD or Lagrange ROM error analysis.

This work is structured as follows. In Section~2 we discuss the one-parameter model problem. In Section~3 we review previous results on snapshot point selection and their application to the model problem. In Section~4 we define the approximation operator and study its properties. The results in this section are independent of the model problem. Section~5 applies the approximation operator to study the accuracy of the Lagrange and POD ROMs and contains our main results for the model problem. We end the article with discussion and conclusions.

\section{Model problem}
\label{sec:bg}
In this section, we first state our model problem and discuss its FE discretization. Second, following \cite{MaPaTu:02}, we derive a formula for the parameter-to-solution map.

Let $\Omega \subset \mathbb{R}^d$ for $d=2$ or 3 be a domain with Lipschitz boundary. In this work, we consider the one-parameter elliptic PDE: for $y\in (-1,1)$ find $u(y) \in H^1_0(\Omega)$ satisfying 
\begin{equation}
\label{eq:weak_problem}
a(y;u(y),v) = L(v) \quad \mbox{for any $v\in H^1_0(\Omega)$.}
\end{equation}
We make the following assumptions on $a$ and $L$. 
\begin{assumption}
\label{ass:bilin}Let $L:H^1_0(\Omega) \rightarrow \mathbb{R}$ be a bounded linear functional and 
\begin{equation*}
    a(y;v,z) = a_0(v,z) + y a_1(v,z),
\end{equation*}
for any $y\in(-1,1)$ and $v,z \in H^1_0(\Omega)$. Here $a_0,a_1 : H^1_0(\Omega) \times H^1_0(\Omega) \rightarrow \mathbb{R}$ are symmetric, continuous bilinear forms. In addition, assume that $a_0(\cdot,\cdot)$ is coercive (positive), $a_1(\cdot,\cdot)$ is positive semi-definite, and that there exists $0 < r < 1$ such that
\begin{equation}
\label{eq:def_r}
\frac{a_1(v,v)}{a_0(v,v)} \leq 1-r \quad \mbox{for any $v\in H^1_0(\Omega) \setminus \{0\}$.}
\end{equation}
\end{assumption}
Under Assumption~\ref{ass:bilin} there exists $C>0$ such that
\begin{equation} 
\label{eq:a1_cont}
|a_1(v,z)| \leq C a_0(v,v)^{1/2} a_0(z,z)^{1/2},
\end{equation}
for any $v,z \in H^1_0(\Omega)$.
Parametric PDEs that satisfy Assumption~\ref{ass:bilin} arise, e.g., when modelling the electrical potential in a domain with constant background conductivity and an  inclusion $\omega \subset \Omega$. In that case $a_0(v,z) = \int_{\Omega} \sigma_0 \nabla v \cdot \nabla z$ and $a_1(v,z) = \int_{\omega} \sigma_s \nabla v \cdot \nabla z$ for $\sigma_0,\sigma_s \in \mathbb{R}^+$. 

By Assumption~\ref{ass:bilin}, $a(y;\cdot,\cdot)$ is continuous and coercive, i.e., satisfies 
\begin{equation*}
    a(y;v,v) \geq r a_0(v,v) \quad \mbox{for any $v\in H^1_0(\Omega)$}.
\end{equation*}
Hence, the Lax-Milgram theorem guarantees the existence of a unique solution to \eqref{eq:weak_problem} that satisfies the bound 
\begin{equation}
\label{eq:bound_u}
a_0( u(y),u(y))^{1/2} \leq r^{-1} \| L\|,
\end{equation}
for any $y\in (-1,1)$ and $r$ as defined in  \eqref{eq:def_r}. The norm of the linear functional $L$ in~\eqref{eq:bound_u} is defined as  
\begin{equation}\label{eq:lin_func_norm}
\| L \| := \sup_{v \in H^1_0(\Omega) \setminus \{0\}} \frac{ |L(v)| }{a_0(v,v)^{1/2}}.
\end{equation}
Using this definition simplifies the following notation. 
\subsection{Finite element discretization}
The FEM is the standard tool for approximately solving \eqref{eq:weak_problem} for a given $y \in (-1,1)$. In FEM, the weak problem is posed in a finite dimensional subspace $X_h \subset H^1_0(\Omega)$ that is defined using a mesh\footnote{For simplicity, assume that the mesh conforms to the domain $\Omega$.} of the domain $\Omega$, see, e.g., \cite{BrSc:08}. That is, one solves the discretised problem: for $y\in (-1,1)$ find $u_h(y) \in X_h$ such that 
\begin{equation}
\label{eq:weak_FE}
      a(y;u_h(y),v_h) = L(v_h) \quad \mbox{for any $v_h \in X_h$.}
\end{equation}
The FE solution satisfies the same bound as the exact solution $u(y)$, i.e.,
\begin{equation}
\label{eq:bound_uh}
a_0(u_h(y),u_h(y))^{1/2} \leq r^{-1} \| L \|,
\end{equation}
for any $y\in (-1,1)$ and $r$ as defined in \eqref{eq:def_r}.

Let $\{\varphi_i\}_{i=1}^m$ be a basis of $X_h$. Then problem \eqref{eq:weak_FE} is equivalent to solving the linear system: for $y\in (-1,1)$ find $x(y) \in \mathbb{R}^{m}$ satisfying
\begin{equation}
\label{eq:plinsys}
    A(y)x(y) = b.
\end{equation}
Here $A\in\mathbb{R}^{m\times m}$ and $b \in \mathbb{R}^m$ satisfy $[A(y)]_{ij} = a(y;\varphi_i,\varphi_j)$ and $b_i = L(\varphi_i)$. The solutions to \eqref{eq:weak_FE} and \eqref{eq:plinsys} are related as
\begin{equation}
\label{eq:duality}
    u_h(y) = \sum_{i=1}^m x_i(y) \varphi_i. 
\end{equation}
It follows from the definition of $A(y)$ that 
\begin{equation}
\label{eq:quad2bilin_A}
    \eta^T A(y) \eta = a(y;z,z) \quad \mbox{for any $\eta \in \mathbb{R}^m$ and $z=\sum_{i=1}^m \eta_i \varphi_i$.}
\end{equation}
As $a(y;\cdot,\cdot)$ is symmetric and coercive, the matrix $A(y)$ is symmetric and positive definite. Further, because the parameter dependency of $a(y;\cdot,\cdot)$ is affine, it holds that 
\begin{equation*}
A(y) = A_0 + y A_1, 
\end{equation*}
for $A_0,A_1\in\mathbb{R}^{m\times m}$ such that 
\begin{equation*}
[A_0]_{ij} = a_0(\varphi_i,\varphi_j)
\quad \mbox{and} \quad 
[A_1]_{ij} = a_1(\varphi_i,\varphi_j).
\end{equation*}
Similar to \eqref{eq:quad2bilin_A} it holds that
\begin{equation}
\label{eq:quad2bilin}
\eta^T A_0 \eta = a_0(z,z) \quad \mbox{and}  \quad \eta^T A_1 \eta = a_1(z,z)
\end{equation}
for any $\eta \in \mathbb{R}^m$ and $z=\sum_{i=1}^m \eta_i \varphi_i$. Hence, by Assumption~\ref{ass:bilin} $A_0$ is positive definite and $A_1$ is positive semidefinite. 

In the rest of this work, we consider POD and Lagrange subspace ROMs for the linear system \eqref{eq:plinsys}. Before discussing these methods in detail, we give an explicit formula for the parameter-to-solution map $x:(-1,1) \rightarrow \mathbb{R}^m$ and review ROMs based on Galerkin projection to a reduced subspace. 

\subsection{Solution formula}

We proceed by recalling the explicit formula for $x(y)$ from \cite{MaPaTu:02}. This formula is important as it is used in Sections~\ref{sec:review} and \ref{sec:error} to derive a priori error estimates of the POD and Lagrange ROMs of the model problem. First, find an $A_0$- and $A_1$-orthogonal basis for $\mathbb{R}^m$ by solving the eigenproblem: find $(\lambda_k,v_k) \in \mathbb{R}^+ \times \mathbb{R}^m$ satisfying
\begin{equation}
\label{eq:GEV}
A_1 v_k = \lambda_k A_0 v_k.
\end{equation}
This eigenproblem is well posed since $A_0$ is a symmetric positive definite (s.p.d.) matrix. Hence, we choose $\{v_k\}$ to be an $A_0$-orthonormal eigenbasis of $\mathbb{R}^m$. By \eqref{eq:GEV} the eigenvectors $v_k$ are also $A_1$-orthogonal. The connection between the quadratic forms of $A_0,A_1$ and the bilinear form in \eqref{eq:quad2bilin} allows us to estimate the eigenvalues. By \eqref{eq:quad2bilin}, we have that 
\begin{equation*}
    \lambda_k = \frac{v_k^T A_1 v_k}{v_k^T A_0 v_k} = \frac{a_1(z
    _k,z_k)}{a_0(z_k,z_k) } \quad \mbox{ for $z_k = \sum_{i=1}^m (v_k)_i \varphi_i$}.
\end{equation*}
Hence, Assumption~\ref{ass:bilin} gives the bound
\begin{equation}
\label{eq:eigenbound}
0 \leq \lambda_k \leq 1-r \quad \mbox{for any $k\in \{1,\ldots,m\}$.}
\end{equation}
Problem \eqref{eq:plinsys} is straightforward to solve using the eigenbasis. We obtain 
\begin{equation}
\label{eq:solution_formula}
    x(y) = \sum_{k=1}^m \gamma_k(y) \beta_k {v}_k \quad\mbox{for} \quad \gamma_k(y) = \frac{1}{1+y\lambda_k} \quad \mbox{and} \quad \beta_k = {v}_k^T {b}.
\end{equation}
Note that $b = A_0 x(0)$ so that $\beta_k = {v}_k^T A_0 {x}(0)$ and $\sum \beta^2_k = \| {x}(0) \|_{A_0}^{2}$.

\section{Review of subspace methods}
\label{sec:review}
In this section we first recall known results for subspace methods and their application to the model problem. Then the existing approaches for deriving a priori error bounds for Lagrange and POD ROMs are discussed. Finally, we review snapshot based ROMs and their error analysis. 

\subsection{Subspace methods}

It is natural to define ROMs for \eqref{eq:plinsys} by using the Galerkin solution from a reduced subspace. In this section, we briefly recall the subspace solution of linear systems. These solutions satisfy the important best approximation property, which we utilise in ROM a priori error analysis. 

Let $V\subset \mathbb{R}^m$ be a subspace with $\dim V = k$ and $Q \in \mathbb{R}^{m \times k}$ a basis matrix of $V$, i.e., the columns of $Q$ are a basis of $V$. In a subspace method, the solution to (\ref{eq:plinsys}) is approximated as $\tilde{x}(y) = Q z(y) $, where $z(y)\in \mathbb{R}^{k}$ is the solution to the surrogate system
\begin{equation}
\label{eq:surrogate_problem}
Q^T  A(y) Q z(y) = Q^T b.
\end{equation}
Before giving an estimate for $\tilde{x}(y)-x(y)$, we introduce some notation: for s.p.d. $B\in\mathbb{R}^{n\times n}$ denote 
\begin{equation}
    \label{eq:Bnorm}
    (w,z)_B = w^T B z \quad \mbox{for any $w,z \in \mathbb{R}^m$ and} \quad \|\cdot\|_B = \sqrt{(\cdot,\cdot)_B}.
\end{equation}
As $B$ is s.p.d., $(\cdot,\cdot)_B :\mathbb{R}^m \times \mathbb{R}^m \rightarrow \mathbb{R}$ is an inner product and $\| \cdot \|_B$ the induced norm. 

The subspace solution $\tilde{x}(y) \in V$ is the $(\cdot,\cdot)_{A(y)}$-orthogonal projection of $x(y)$ to $V$. Thus, the error $\| x(y)-\tilde{x}(y)\|_{A(y)}$ satisfies the best approximation property
\begin{equation}
\label{eq:best_approximation}
    \| x(y) - \tilde{x}(y) \|_{A(y)} = \min_{\tilde{v}\in V} \| x(y) - \tilde{v} \|_{A(y)} \quad \mbox{for any $y\in(-1,1)$}.
\end{equation}
That is, the accuracy of the subspace solution $\tilde{x}(y)$ depends on $V$. The aim of subspace methods is to seek a low-dimensional $V$ that can approximate $x$ with sufficient accuracy. In this case, the surrogate problem in \eqref{eq:surrogate_problem} has a small dimension and $\tilde{x}(y)$ can be computed much faster than $x(y)$. When solving parametric problems, the same subspace is used for every $y \in (-1,1)$. 

The best approximation property is stated in the $A(y)$-norm. Next, we give norm equivalences that allow us to obtain a near-best approximation result in the $A_0$-norm. The norm equivalences are established in $H^1_0(\Omega)$ and then extended using \eqref{eq:quad2bilin_A} and \eqref{eq:quad2bilin} to vector norms. By Assumption~\ref{ass:bilin} and \eqref{eq:a1_cont} it holds that  
\begin{equation}
|a(y;v,w)| \leq (1+C) a_0(v,v)^{1/2} a_0(w,w)^{1/2}
\end{equation}
for any $v\in H^1_0(\Omega)$ and $y\in (-1,1)$. Then 
\begin{equation}
\label{eq:norm_eq}
r a_0(v,v) \leq a(y;v,v) \leq (1+C) a_0(v,v), 
\end{equation}
for any $v\in H^1_0(\Omega)$ and $y\in (-1,1)$. It follows from the above equation and \eqref{eq:quad2bilin_A}, \eqref{eq:quad2bilin}, \eqref{eq:Bnorm} that
\begin{equation*}
r \| \eta\|^2_{A_0} \leq \| \eta\|^2_{A(y)} \leq (1+C)\| \eta\|^2_{A_0}, 
\end{equation*}
for any $y\in(-1,1)$ and $\eta \in \mathbb{R}^m$. Using this norm equivalence and the best approximation property \eqref{eq:best_approximation} yields
\begin{equation}
\label{eq:A0approx}
\| x(y) - \tilde{x}(y) \|_{A_0} \leq \kappa \min_{\tilde{v} \in V} \| x(y) - \tilde{v} \|_{A_0} \quad \mbox{for any $y\in(-1,1)$},
\end{equation}
where 
\begin{equation}
\label{eq:kappadef}
    \kappa^2 = \frac{1+C}{r},
\end{equation}
for $C,r$ as defined in \eqref{eq:a1_cont} and \eqref{eq:def_r}.

\subsection{Lagrange subspace method}
\label{sec:lagrange}
This section describes the Lagrange subspace method for creating ROMs for \eqref{eq:plinsys} and recalls the error estimates for it from the literature.
Most of the material is well known in ROM literature, see, e.g.,  \cite{MaPaTu:02,Po:85,FiRh:83}, and adapted here to our model problem and notation. Our aim is to collect convergence results obtained using different techniques and evaluate how the convergence rates depend on the problem parameters $r,C$ and what kind of assumptions are made on the snapshot point set. 

Let $\{y_i\}_{i=1}^N \subset (-1,1)$ be the set of distinct sample points. In the Lagrange subspace method, one sets
\begin{equation}
\label{eq:Lagrange}
V(\{y_i\}_{i=1}^N) = \mathop{span} \{ x(y_1), x(y_2),\ldots,x(y_N)\}.
\end{equation}
When the sample point set is clear from the context, we use the shorthand notation $V$. For a given parameter $y\in (-1,1)$ the Lagrange ROM returns the subspace solution $\tilde{x}(y)$ of \eqref{eq:plinsys} from $V$. According to \eqref{eq:A0approx}, it holds that 
\begin{equation*}
\|x(y)-\tilde{x}(y) \|_{A_0} \leq
\kappa\| x(y)-\tilde{v}\|_{A_0}, \hspace{1mm} \text{ for any $\tilde{v} \in V$ and $y\in (-1,1)$}.
\end{equation*}
Error estimates are derived by choosing $\tilde{v} \in V$ so that the right-hand side of the above equation can be bounded.

\subsubsection{Perturbation analysis} 
Next, we consider choosing $\tilde{v} = x(y^*(y))$, where the parameter $y^*(y) \in \{ y_i \}_{i=1}^N$ is the closest snapshot point to the given evaluation point $y$. The error estimate then follows from the Lipschitz continuity of the parameter-to-solution map $x:(-1,1) \rightarrow \mathbb{R}^m$ in $L^\infty(-1,1;\mathbb{R}^m_{A_0})$, where $\mathbb{R}^m_{A_0}$ is the space $\mathbb{R}^m$ with the $A_0$-weighted norm. 
\begin{lemma} 
\label{lemma:lip} Let $x:(-1,1) \rightarrow \mathbb{R}^m$ be the solution to \eqref{eq:plinsys}. Then 
\begin{equation*}
\| x(\eta) - x(\gamma) \|_{A_0} \leq K|\eta-\gamma|,
\end{equation*}
for any $\eta,\gamma \in (-1,1)$ and $K = C r^{-2}  \| L \|$. Here $C,r>0$ are as defined in Assumption~\ref{ass:bilin} and 
the norm of $L : H^1_0(\Omega) \rightarrow \mathbb{R}$ is as defined in (\ref{eq:lin_func_norm}).
\end{lemma}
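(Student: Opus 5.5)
We take the difference of the two linear systems and use coercivity together with the continuity bound \eqref{eq:a1_cont}. Since $A(\eta)x(\eta)=b=A(\gamma)x(\gamma)$ and $A(\eta)=A(\gamma)+(\eta-\gamma)A_1$, we have
\begin{equation*}
A(\eta)\bigl(x(\eta)-x(\gamma)\bigr) = A(\gamma)x(\gamma)-A(\eta)x(\gamma) = -(\eta-\gamma)A_1 x(\gamma).
\end{equation*}
Set $e = x(\eta)-x(\gamma)$ and test this identity with $e$, obtaining $e^TA(\eta)e = -(\eta-\gamma)\,e^TA_1x(\gamma)$.

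To bound the left-hand side from below, we use the norm equivalence $r\|e\|_{A_0}^2\le \|e\|_{A(\eta)}^2$, which follows from \eqref{eq:norm_eq} and \eqref{eq:quad2bilin_A}. For the right-hand side, we pass to finite element functions $z_e=\sum_i e_i\varphi_i$ and $z_\gamma=\sum_i x_i(\gamma)\varphi_i = u_h(\gamma)$ via \eqref{eq:duality}. Then $e^TA_1x(\gamma)=a_1(z_e,u_h(\gamma))$, and \eqref{eq:a1_cont} gives
\begin{equation*}
|e^TA_1x(\gamma)|\le C\,\|e\|_{A_0}\,a_0(u_h(\gamma),u_h(\gamma))^{1/2}.
\end{equation*}
By \eqref{eq:bound_uh}, the last factor is at most $r^{-1}\|L\|$.

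Combining the two estimates yields $r\|e\|_{A_0}^2\le |\eta-\gamma|\,C\,\|e\|_{A_0}\,r^{-1}\|L\|$. If $e\neq 0$, dividing by $r\|e\|_{A_0}$ gives $\|e\|_{A_0}\le Cr^{-2}\|L\||\eta-\gamma|$, so $K=Cr^{-2}\|L\|$; the case $e=0$ is trivial.

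No step is a real obstacle. The only point needing care is identifying the vector bilinear quantities with the forms $a_0,a_1$ on $X_h$, so that \eqref{eq:a1_cont} and \eqref{eq:bound_uh} apply; \eqref{eq:quad2bilin} together with polarization handles this.
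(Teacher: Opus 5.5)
Your proof is correct and is essentially the paper's argument. The paper works directly with $e_h=u_h(\eta)-u_h(\gamma)$ and the bilinear forms rather than with the matrices, but it uses the same steps in the same order: coercivity, the identity $a(\gamma;u_h(\gamma),e_h)-a(\eta;u_h(\gamma),e_h)=(\gamma-\eta)a_1(u_h(\gamma),e_h)$, the continuity bound \eqref{eq:a1_cont}, and \eqref{eq:bound_uh}.
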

\begin{proof} This result is well known and follows from perturbation analysis, see, e.g., \cite{CoDe:15}. We give here a version adapted to our notation. By \eqref{eq:Bnorm}, \eqref{eq:duality}, and \eqref{eq:quad2bilin} we have that $\| x(\eta)-x(\gamma)\|_{A_0}^2 = a_0(u_h(\eta)-u_h(\gamma),u_h(\eta)-u_h(\gamma))$. Denote $e_h = u_h(\eta)-u_h(\gamma)$. By \eqref{eq:norm_eq} and \eqref{eq:weak_FE}
\begin{equation*}
a_0(e_h,e_h) \leq r^{-1} a(\eta; e_h, e_h) = r^{-1} \left[ a(\gamma;u_h(\gamma),e_h)-a(\eta;u_h(\gamma),e_h) \right].
\end{equation*}
It follows from  Assumption~\ref{ass:bilin} that
\begin{equation*}
a(\gamma;u_h(\gamma),e_h)-a(\eta;u_h(\gamma),e_h)  = (\gamma-\eta) a_1(u_h(\gamma),e_h).
\end{equation*}
Using continuity of $a_1(\cdot,\cdot)$ in \eqref{eq:a1_cont} gives
\begin{equation*}
a_0(e_h,e_h) \leq Cr^{-1} |\eta-\gamma| a_0(u_h(\gamma),u_h(\gamma))^{1/2} 
a_0( e_h, e_h )^{1/2}.
\end{equation*}
The result follows after applying the bound on
$a_0(u_h(\gamma),u_h(\gamma))$ in \eqref{eq:bound_uh} and dividing by $a_0(e_h,e_h)^{1/2}$.
\end{proof}
It follows from \eqref{eq:A0approx} and Lemma~\ref{lemma:lip} that 
\begin{equation*}
\|x(y)-\tilde{x}(y) \|_{A_0} \leq K \kappa \min_{y^*\in \{y_i\}_{i=1}^N} |y-y^*|.
\end{equation*} 
That is, the upper bound depends on how close to the evaluation point $y$ there is a sample point and on the Lipschitz constant $K$. The implication is that reaching $\epsilon$ accuracy in the $L^\infty(-1,1;\mathbb{R}^m_{A_0})$-norm requires $O(\epsilon^{-1})$ sample points. 

\subsubsection{Lagrange interpolation error analysis}

Let the reduced subspace be as in \eqref{eq:Lagrange} and choose $\tilde{v}$ in \eqref{eq:A0approx} as the Lagrange interpolant of $x\in [C([-1,1])]^{m}$. Next, we use the remainder theorem to  derive an error estimate for the resulting Lagrange ROM. Let $\mathcal{P}^{n}([-1,1])$ be the space of degree $n$ polynomials on $[-1,1]$, and  $\Pi_L(\{y_i\}_{i=1}^N):C([-1,1]) \rightarrow \mathcal{P}^{N-1}([-1,1])$ the Lagrange interpolation operator related to $\{y_i\}_{i=1}^N$, i.e., 
\begin{equation}
\label{eq:LagIP}
    (\Pi_L(\{y_i\}_{i=1}^N) f)(y) = \sum_{i=1}^N \ell_i(y) f(y_i) \quad \mbox{for any $f \in C([-1,1])$.} 
\end{equation}
Here $\ell_i \in \mathcal{P}^{N-1}([-1,1])$ are the Lagrange basis polynomials satisfying $\ell_i(y_j) = \delta_{ij}$ for $i,j \in \{1,\ldots,N\}$. For notational convenience, we write $\Pi_L$ for $\Pi_L(\{y_i\}_{i=1}^N)$ whenever the interpolation point set is clear from the context. We slightly abuse the notation and also apply the interpolation operator $\Pi_L$ to $[C([-1,1])]^m$-functions. Next, key results from Lagrange interpolation error analysis for scalar functions are briefly reviewed, see, e.g., \cite{davis}. We start with the remainder theorem 
\begin{theorem} Let $f \in C^N([-1,1])$. Then there exists $\xi \in [-1,1]$ such that
\begin{equation*} 
f - \Pi_L f = \frac{1}{N!} \prod_{i=1}^{N} (y-y_i) f^{(N)}(\xi).
\end{equation*}
\end{theorem}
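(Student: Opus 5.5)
The plan is the classical auxiliary-function argument based on repeated application of Rolle's theorem. Here $y$ denotes the evaluation point and the claim is read pointwise: for each $y\in[-1,1]$ there exists $\xi=\xi(y)\in[-1,1]$ such that the identity holds.

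First, the trivial case. If $y$ coincides with one of the nodes $y_j$, both sides vanish. Indeed, $(\Pi_L f)(y_j)=f(y_j)$ since $\ell_i(y_j)=\delta_{ij}$, and the product $\prod_{i=1}^N (y-y_i)$ vanishes at $y=y_j$. Any $\xi$ therefore works.

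Now fix $y\in[-1,1]$ distinct from all nodes and write $\omega(t)=\prod_{i=1}^N (t-y_i)$, so that $\omega(y)\neq 0$. Define the auxiliary function
\begin{equation*}
g(t) = f(t) - (\Pi_L f)(t) - \frac{f(y)-(\Pi_L f)(y)}{\omega(y)}\,\omega(t), \qquad t\in[-1,1].
\end{equation*}
Since $f\in C^N([-1,1])$ and the other two terms are polynomials, $g\in C^N([-1,1])$. By construction $g$ vanishes at the $N$ distinct nodes $y_1,\ldots,y_N$ and also at $t=y$. This gives $N+1$ distinct zeros in $[-1,1]$.

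The next step is the repeated application of Rolle's theorem. Between consecutive zeros of $g$ there is a zero of $g'$, so $g'$ has at least $N$ distinct zeros in $[-1,1]$. Iterating, $g^{(k)}$ has at least $N+1-k$ distinct zeros, and so $g^{(N)}$ has at least one zero $\xi\in[-1,1]$. This counting is the only step requiring care: the zeros of successive derivatives must remain distinct. They do, because Rolle produces one zero in each open interval between consecutive zeros, and these intervals are disjoint.

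Finally, compute $g^{(N)}$. Since $\Pi_L f\in\mathcal{P}^{N-1}([-1,1])$, its $N$-th derivative vanishes. Since $\omega$ is monic of degree $N$, we have $\omega^{(N)}=N!$. Hence
\begin{equation*}
0=g^{(N)}(\xi) = f^{(N)}(\xi) - \frac{f(y)-(\Pi_L f)(y)}{\omega(y)}\, N!,
\end{equation*}
which rearranges to $f(y)-(\Pi_L f)(y) = \frac{1}{N!}\,\omega(y)\, f^{(N)}(\xi)$, as claimed.

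The point $\xi$ depends on $y$. For the vector-valued functions of the paper the theorem is applied componentwise, so each component may have its own $\xi$.
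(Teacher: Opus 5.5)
Your proof is correct. It is the classical auxiliary-function argument with repeated Rolle's theorem, which is exactly what the paper relies on: it gives no proof of its own and only cites Davis for this standard result. You also correctly note that the identity must be read pointwise, with $\xi=\xi(y)$, and that the vector-valued case is handled componentwise.
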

\begin{proof} See, e.g., \cite{davis} \end{proof}
\noindent It follows from the remainder theorem that
\begin{equation*}
\| f - \Pi_L f \|_{L^\infty(-1,1)} \leq \frac{1}{N!} \| W(y) \|_{L^\infty(-1,1)} \| f^{(N)} \|_{L^\infty(-1,1)},
\end{equation*}
where $W(y) =\prod_{i=1}^N (y-y_i)$ is a monic polynomial of degree $N$. This upper bound is minimised when  the snapshot points are chosen as Chebyshev interpolation points, i.e., the roots of the degree $N$ Chebyshev polynomial of the first kind $T_N$ so that $W=\hat{T}_N$. The monic Chebyshev polynomial $\hat{T}_N$ has the minimal $L^\infty(-1,1)$ norm over all monic polynomials of degree $N$ and $\| \hat{T}_N \|_{L^\infty(-1,1)} = 2^{-(N-1)}$. This leads to the following corollary
\begin{corollary} \label{cor:cheby} Let $f\in C^{N}([-1,1])$ and $\{y_i\}_{i=1}^N$ be the Chebyshev interpolation points. Then 
\begin{equation*}
\| f - \Pi_L f \|_{L^\infty(-1,1)}\leq \frac{1}{2^{N-1} N!} \| f^{(N)} \|_{L^\infty{(-1,1)}}.
\end{equation*}
\end{corollary}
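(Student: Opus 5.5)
The bound in Corollary~\ref{cor:cheby} follows from the $L^\infty$ form of the remainder theorem once the nodal polynomial $W$ is identified and its sup-norm computed. The plan is as follows.

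First, I will identify $W$ for the Chebyshev points. The Chebyshev interpolation points are the $N$ distinct roots $y_i=\cos\bigl(\frac{(2i-1)\pi}{2N}\bigr)$, $i=1,\ldots,N$, of $T_N$. Since $T_N$ has degree exactly $N$ with leading coefficient $2^{N-1}$ (for $N\geq 1$, by the recurrence $T_{k+1}=2yT_k-T_{k-1}$ with $T_0=1$, $T_1=y$), it factors as
\begin{equation*}
T_N(y)=2^{N-1}\prod_{i=1}^N (y-y_i).
\end{equation*}
Hence $W=\hat T_N=2^{1-N}T_N$.

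Next, I will compute the sup-norm of $W$. On $[-1,1]$ we have $T_N(\cos\theta)=\cos(N\theta)$, so $\|T_N\|_{L^\infty(-1,1)}=1$. It follows that $\|W\|_{L^\infty(-1,1)}=2^{-(N-1)}$. The minimality property of $\hat T_N$ among monic polynomials is not needed for the inequality itself; it only explains why this is the optimal choice of points.

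Finally, I will combine these with the remainder theorem. For each fixed $y\in[-1,1]$ the remainder theorem gives some $\xi=\xi(y)$ with
\begin{equation*}
|f(y)-(\Pi_L f)(y)|=\tfrac{1}{N!}|W(y)|\,|f^{(N)}(\xi)|\leq \tfrac{1}{N!}\|W\|_{L^\infty(-1,1)}\|f^{(N)}\|_{L^\infty(-1,1)}.
\end{equation*}
Taking the supremum over $y$ and inserting $\|W\|_{L^\infty(-1,1)}=2^{-(N-1)}$ yields the stated bound.

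The only point needing care is that $\xi$ depends on $y$. This is harmless because we bound $|f^{(N)}(\xi)|$ by the global sup-norm before taking the supremum. No step is a real obstacle; the argument is a direct substitution.
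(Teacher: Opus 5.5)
Your proposal is correct and follows the paper's argument: the remainder theorem gives the bound $\frac{1}{N!}\|W\|_{L^\infty(-1,1)}\|f^{(N)}\|_{L^\infty(-1,1)}$, and for Chebyshev points $W=\hat T_N$ with $\|\hat T_N\|_{L^\infty(-1,1)}=2^{-(N-1)}$. You are also right that the minimality of $\hat T_N$ among monic polynomials only motivates the choice of points and is not used in the inequality, and your handling of the $y$-dependence of $\xi$ is accurate.
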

We apply this corollary to give an error estimate for the subspace solution when the snapshot points are chosen as Chebyshev interpolation points. Before proceeding, we estimate the $L^\infty(-1,1)$ norm of the  derivatives of the coefficient function $\gamma_k = (1+y\lambda_k)^{-1}$. By \eqref{eq:eigenbound}, the eigenvalues satisfy
\begin{equation*}
0\leq\lambda_k \leq 1-r, 
\end{equation*}
for $0 < r < 1 $ defined in Assumption~\ref{ass:bilin}. Direct computation gives
\begin{equation}
\label{eq:gammaDN}
\gamma_k^{(N)}(y) =  (-1)^N N! \frac{\lambda_k^{N}}{(1+y\lambda_k)^{N+1}} \quad \mbox{so that} \quad \|\gamma_k^{(N)}\|_{L^\infty(-1,1)} \leq N! \frac{(1-r)^{N}}{r^{N+1}}.
\end{equation}
We arrive at the following estimate: 
\begin{corollary} 
\label{cor:rembound}
Let $\{y_i\}_{i=1}^N$ be the Chebyshev interpolation points and $V(\{y_i\}_{i=1}^N)$ the Lagrange subspace defined in \eqref{eq:Lagrange}. In addition, let $x(y) \in \mathbb{R}^m$ be the solution to \eqref{eq:plinsys} and $\tilde{x}(y) \in V(\{y_i\}_{i=1}^N)$ the subspace solution to \eqref{eq:plinsys} from $V(\{y_i\})_{i=1}^N$. Then 
\begin{equation}
\label{eq:Remainder_error}
\frac{\|x(y)-\tilde{x}(y) \|_{A_0}}{\| x(0)\|_{A_0}} \leq
\frac{\kappa }{2^{N-1}}\frac{(1-r)^{N}}{r^{N+1}} \quad \mbox{for any $y\in(-1,1)$}. 
\end{equation}
\end{corollary}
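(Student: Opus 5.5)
We will combine the near-best approximation estimate \eqref{eq:A0approx} with Corollary~\ref{cor:cheby}, applied componentwise in the $A_0$-orthonormal eigenbasis $\{v_k\}$. The choice of $\tilde v$ is the Lagrange interpolant of $x$ at the Chebyshev points. By \eqref{eq:LagIP},
\begin{equation*}
\tilde v := (\Pi_L x)(y) = \sum_{i=1}^N \ell_i(y) x(y_i),
\end{equation*}
which is a linear combination of the snapshots and hence lies in $V(\{y_i\}_{i=1}^N)$. By \eqref{eq:A0approx} it therefore suffices to bound $\|x(y)-(\Pi_L x)(y)\|_{A_0}$ for each fixed $y\in(-1,1)$.

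Next we use the solution formula \eqref{eq:solution_formula}. Since $\Pi_L$ is linear and the vectors $\beta_k v_k$ do not depend on $y$, we have $(\Pi_L x)(y) = \sum_k (\Pi_L\gamma_k)(y)\beta_k v_k$. Hence
\begin{equation*}
x(y)-(\Pi_L x)(y) = \sum_{k=1}^m \bigl(\gamma_k(y)-(\Pi_L\gamma_k)(y)\bigr)\beta_k v_k .
\end{equation*}
The $v_k$ are $A_0$-orthonormal, so
\begin{equation*}
\|x(y)-(\Pi_L x)(y)\|_{A_0}^2 = \sum_k |\gamma_k(y)-(\Pi_L\gamma_k)(y)|^2\beta_k^2 .
\end{equation*}

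Each $\gamma_k$ is smooth on $[-1,1]$, because $1+y\lambda_k\ge 1-(1-r)=r>0$ by \eqref{eq:eigenbound}. Corollary~\ref{cor:cheby} together with \eqref{eq:gammaDN} then gives
\begin{equation*}
|\gamma_k(y)-(\Pi_L\gamma_k)(y)| \le \frac{1}{2^{N-1}N!}\,N!\,\frac{(1-r)^N}{r^{N+1}} = \frac{(1-r)^N}{2^{N-1}r^{N+1}},
\end{equation*}
and this bound is uniform in $k$. Pulling the bound out of the sum and using $\sum_k\beta_k^2=\|x(0)\|_{A_0}^2$, noted after \eqref{eq:solution_formula}, yields
\begin{equation*}
\|x(y)-(\Pi_L x)(y)\|_{A_0}\le \frac{(1-r)^N}{2^{N-1}r^{N+1}}\|x(0)\|_{A_0}.
\end{equation*}
Multiplying by $\kappa$ from \eqref{eq:A0approx} and dividing by $\|x(0)\|_{A_0}$ gives \eqref{eq:Remainder_error}. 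We assume $x(0)\neq 0$; if $x(0)=0$, then $b=0$, so $x\equiv 0$ and the error vanishes.

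There is no real obstacle here. The one point needing care is the derivative bound \eqref{eq:gammaDN} on the closed interval, which is needed to apply Corollary~\ref{cor:cheby}; it uses $\lambda_k\le 1-r$ to keep $1+y\lambda_k\ge r$ for all $y\in[-1,1]$. We must also check that the interpolant, built from snapshot values, indeed belongs to $V$.
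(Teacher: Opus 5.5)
Your proposal is correct and matches the paper's proof step for step. Like the paper, you take $\tilde v=(\Pi_L x)(y)$ in the near-best bound \eqref{eq:A0approx} and expand the interpolation error in the $A_0$-orthonormal eigenbasis. You then bound each $|(I-\Pi_L)\gamma_k(y)|$ uniformly in $k$ via Corollary~\ref{cor:cheby} and \eqref{eq:gammaDN}, and close with $\sum_k\beta_k^2=\|x(0)\|_{A_0}^2$. Your extra checks (the interpolant lies in $V$, and the degenerate case $x(0)=0$) are harmless additions.
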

\begin{proof}
The near-best approximation property yields 
\begin{equation*}
\|x(y)-\tilde{x}(y) \|_{A_0} \leq
\kappa \| (I-\Pi_L)x(y)\|_{A_0}.
\end{equation*}
By the solution formula \eqref{eq:solution_formula} and $A_0$-orthogonality of $\{v_k\}$ it holds that 
\begin{align*}
\| (I-\Pi_L)x(y)\|^2_{A_0}
&= \sum_{k=1}^m [(I-\Pi_L)\gamma_k(y)]^2 \beta_k^2
\\&
\leq \max_{k} \| (I-\Pi_L) \gamma_k \|^2_{L^\infty(-1,1)} \sum_{k=1}^m  \beta_k^2 .
\end{align*}
By Corollary~\ref{cor:cheby},
\begin{equation*}
\| (I-\Pi_L)\gamma_k \|_{L^\infty(-1,1)} \leq \frac{1}{N!} \frac{1}{2^{N-1}} \| \gamma_k^{(N)}\|_{L^\infty(-1,1)}.
\end{equation*}
The proof is completed by application of the estimate \eqref{eq:gammaDN} and recalling that $\sum_{k=1}^m \beta^2_k = \| x(0) \|_{A_0}^{2}$.
\end{proof}
The upper bound \eqref{eq:Remainder_error} is derived under the assumption that the sample points are Chebyshev interpolation points. The error converges to zero with increasing $N$ iff
\begin{equation*}
    \frac{1-r}{2r} < 1 \quad \mbox{this is} \quad r > \frac{1}{3}.
\end{equation*}
This condition considerably limits the applicability of the bound. 
\subsubsection{Near-best Lagrange interpolation error analysis}

Next, we apply a different technique to derive a Lagrange interpolation-based error bound for $\|x(y)-\tilde{x}(y)\|_{A_0}$. Namely, we use the polynomial exactness of the interpolation operator $\Pi_L$ and bound the error by the Lebesgue constant and the best polynomial approximation error of $\gamma_k$. We then apply a problem specific best approximation result to give final convergence rate. 

\begin{definition} Let $\{y_i\}_{i=1}^N \subset (-1,1)$ and $\Pi_L \equiv \Pi_L(\{y_i\}_{i=1}^N)$ be as defined in \eqref{eq:LagIP}. Then the Lebesgue constant related to the point set $\{y_i\}_{i=1}^N$ is 
\begin{equation*}
\Lambda_N(\{y_i\}_{i=1}^N) = \sup_{f\in C([-1,1])} \frac{\|\Pi_L f\|_{L^\infty(-1,1)}}{\|f\|_{L^\infty(-1,1)}}.
\end{equation*}
\end{definition}
When the point set is clear from the context, we use the notation $\Lambda_N \equiv\Lambda_N(\{y_i\}_{i=1}^N)$. The Lebesgue constant has been estimated for different interpolation point families when $N \rightarrow \infty$, see \cite{brutman1997lebesgue} and \cite{trefethen2019}. For example, for the family of equidistant points $\{-1+2\frac{n-1}{N-1}\}_{n=1}^N$, $\Lambda_N$ grows exponentially with $N$ whereas for Chebyshev points  it only has logarithmic growth. 

By construction, $\Pi_L$ is exact for degree $N-1$ polynomials, i.e., $\Pi_L p = p$ for any $p\in \mathcal{P}^{N-1}([-1,1])$. This leads to the well-known near-best approximation result, see, e.g., \cite{trefethen2019}.
\begin{theorem} 
\label{thm:nearbest} Let $\{y_i\}_{i=1}^N \subset (-1,1)$ be distinct point set and $\Pi_L \equiv \Pi_L(\{y_i\}_{i=1}^N)$ be as defined in \eqref{eq:LagIP}. Then there holds that 
\begin{equation}
\label{eq:lag_best}
\| (I-\Pi_L)f\|_{L^\infty(-1,1)} \leq (1+\Lambda_N) \min_{p\in \mathcal{P}^{N-1}([-1,1])}\| f-p \|_{L^\infty(-1,1)}
\end{equation}
for any $f\in C([-1,1])$. 
\end{theorem}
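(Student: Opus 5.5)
The argument rests on two facts: $\Pi_L$ is linear and reproduces every polynomial of degree at most $N-1$. We then split the error through an arbitrary polynomial and take the infimum at the end.

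Fix $f\in C([-1,1])$ and an arbitrary $p\in\mathcal{P}^{N-1}([-1,1])$. The Lagrange basis polynomials satisfy $\ell_i(y_j)=\delta_{ij}$, and $\mathcal{P}^{N-1}([-1,1])$ has dimension $N$. Hence $\Pi_L p$ and $p$ are two polynomials of degree at most $N-1$ that agree at the $N$ distinct points $\{y_i\}_{i=1}^N$, so $\Pi_L p = p$. Since $\Pi_L$ is linear in $f$ by \eqref{eq:LagIP}, we can write
\begin{equation*}
(I-\Pi_L)f = (f-p) - \Pi_L(f-p).
\end{equation*}

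Taking the $L^\infty(-1,1)$ norm and using the triangle inequality gives
\begin{equation*}
\|(I-\Pi_L)f\|_{L^\infty(-1,1)} \leq \|f-p\|_{L^\infty(-1,1)} + \|\Pi_L(f-p)\|_{L^\infty(-1,1)}.
\end{equation*}
Since $f-p\in C([-1,1])$, the definition of the Lebesgue constant as an operator norm gives $\|\Pi_L(f-p)\|_{L^\infty(-1,1)}\leq \Lambda_N\|f-p\|_{L^\infty(-1,1)}$. Therefore
\begin{equation*}
\|(I-\Pi_L)f\|_{L^\infty(-1,1)}\leq (1+\Lambda_N)\|f-p\|_{L^\infty(-1,1)}.
\end{equation*}

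This bound holds for every $p\in\mathcal{P}^{N-1}([-1,1])$, so we take the infimum over $p$. The infimum is attained, because $\mathcal{P}^{N-1}([-1,1])$ is a finite-dimensional subspace of the normed space $C([-1,1])$, so a best approximation exists. This justifies writing $\min$ and yields \eqref{eq:lag_best}.

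There is no real obstacle here. The only points needing care are the polynomial reproduction $\Pi_L p=p$, which follows from unisolvence at $N$ distinct nodes, and the finiteness of $\Lambda_N$. The latter holds because $\Lambda_N=\|\sum_{i=1}^N|\ell_i|\|_{L^\infty(-1,1)}<\infty$.
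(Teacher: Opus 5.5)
Your proof is correct and is the standard argument the paper relies on: the paper gives no proof of its own and cites \cite{trefethen2019}, but it states the polynomial exactness $\Pi_L p = p$ immediately beforehand. It then repeats exactly your exactness-plus-stability splitting, with $\Pi_n$ in place of $\Pi_L$, in the proof of Corollary~\ref{cor:ip_error}.
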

This result gives an alternative way to derive Lagrange interpolation-based ROM error estimates. The bound \eqref{eq:lag_best} does not depend on the higher-order derivatives of the function $f$, but rather on the best polynomial approximation error of $f$. In general, the best polynomial approximation error can be estimated using Jackson inequalities that bound the error by higher-order derivatives, see \cite{davis}.  Alternatively, one can derive improved bounds using problem-specific techniques. 

We proceed to estimate the interpolation error of $\gamma_k$ by using the best approximation results derived for the family of functions $f(t) = (t-c)^{-1}$ for  $ t\in [-1,1]$ and $c > 1$ given in \cite{Rivlin}, see also \cite{BestUniformApproxJM,BestUniformApproxZY}. By these bounds, it holds that 
\begin{equation*}
\min_{p\in \mathcal{P}^{N-1}([-1,1])}\| \gamma_k - p\|_{L^\infty(-1,1)} \leq  \frac{4}{\lambda_k(\tau_k^2-1)^2} |\tau_k|^{N+1} \quad \mbox{where} \quad |\tau_k| \leq \lambda_k,
\end{equation*}
see Appendix \ref{sec:best_poly}. We proceed to simplify the RHS of this estimate. As $|\tau_k| \leq \lambda_k$, we obtain  
\begin{equation*}
\frac{4}{\lambda_k(\tau_k^2-1)^2} |\tau_k|^{N+1} \leq  \frac{4}{(\tau_k^2-1)^2} |\tau_k|^{N}.
\end{equation*}
Using the bound $|\tau_k| \leq \lambda_k \leq 1-r$, recalling that $r<1$, and estimating $(\tau^2_k-1)^2 > r^2(2-r)^2>r^2$ gives
\begin{equation}
\label{eq:gamma_best}
\min_{p\in \mathcal{P}^{N-1}([-1,1])}\| \gamma_k - p\|_{L^\infty(-1,1)} \leq \frac{4}{r^2} (1-r)^{N}.
\end{equation}
We have arrived at an error estimate based on the near-best approximation result. 
\begin{corollary}
Let $\{y_i\}_{i=1}^N \subset (-1,1)$ be the set of distinct snapshot points, $\Lambda_N \equiv \Lambda_N(\{y_i\}_{i=1}^N)$ be the Lebesgue constant related to $\{y_i\}_{i=1}^N$, and $V(\{y_i\}_{i=1}^N)$ the Lagrange subspace defined in \eqref{eq:Lagrange}. In addition, let $x(y) \in \mathbb{R}^m$ be the solution to \eqref{eq:plinsys} and $\tilde{x}(y) \in V(\{y_i\}_{i=1}^N)$ the subspace solution to \eqref{eq:plinsys} from $V(\{y_i\}_{i=1}^N)$. Then 
\begin{equation}
\label{eq:nb_lag}
    \frac{\| x(y) - \tilde{x}(y)\|_{A_0}}{\| x(0)\|_{A_0}} \leq \frac{4\kappa(1+\Lambda_N)}{r^2}(1-r)^{N} \quad \mbox{for any $y\in(-1,1)$}.
\end{equation}
\end{corollary}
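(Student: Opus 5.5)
The proof follows the same route as Corollary~\ref{cor:rembound}, with the remainder theorem replaced by the near-best interpolation estimate of Theorem~\ref{thm:nearbest}.

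First, by the Lagrange interpolation formula \eqref{eq:LagIP}, $\Pi_L x(y) = \sum_{i=1}^N \ell_i(y) x(y_i)$. For each fixed $y\in(-1,1)$ this vector is a linear combination of snapshots, so it lies in $V(\{y_i\}_{i=1}^N)$. Choosing $\tilde v = \Pi_L x(y)$ in the near-best estimate \eqref{eq:A0approx} gives
\begin{equation*}
\|x(y)-\tilde{x}(y)\|_{A_0} \leq \kappa \|(I-\Pi_L)x(y)\|_{A_0}.
\end{equation*}

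Second, we insert the solution formula \eqref{eq:solution_formula}. Since $\Pi_L$ acts componentwise and linearly in the parameter, it commutes with the expansion in the eigenbasis:
\begin{equation*}
(I-\Pi_L)x(y)=\sum_{k=1}^m [(I-\Pi_L)\gamma_k](y)\,\beta_k v_k.
\end{equation*}
The $v_k$ are $A_0$-orthonormal, so
\begin{equation*}
\|(I-\Pi_L)x(y)\|_{A_0}^2 = \sum_k [(I-\Pi_L)\gamma_k(y)]^2\beta_k^2 \leq \max_k \|(I-\Pi_L)\gamma_k\|_{L^\infty(-1,1)}^2 \,\|x(0)\|_{A_0}^2,
\end{equation*}
where we used $\sum_k\beta_k^2=\|x(0)\|_{A_0}^2$.

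Third, each scalar factor is bounded by applying Theorem~\ref{thm:nearbest} to $\gamma_k$. This is legitimate because $\gamma_k$ is continuous on $[-1,1]$, since $1+y\lambda_k\ge r>0$ there. The theorem gives
\begin{equation*}
\|(I-\Pi_L)\gamma_k\|_{L^\infty(-1,1)} \leq (1+\Lambda_N)\min_{p\in\mathcal{P}^{N-1}([-1,1])}\|\gamma_k-p\|_{L^\infty(-1,1)}.
\end{equation*}
The best approximation error is then controlled by \eqref{eq:gamma_best}, which gives the bound $4r^{-2}(1-r)^N$ uniformly in $k$. Combining the three steps and dividing by $\|x(0)\|_{A_0}$ yields \eqref{eq:nb_lag}.

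The only delicate point is \eqref{eq:gamma_best} itself. It rests on the Rivlin-type best-approximation bound for $(t-c)^{-1}$, obtained by rewriting $\gamma_k(y)=\lambda_k^{-1}(y+\lambda_k^{-1})^{-1}$ with $c=-\lambda_k^{-1}$, $|c|>1$ (Appendix~\ref{sec:best_poly}). The case $\lambda_k=0$ must be handled separately: there $\gamma_k\equiv1$ is reproduced exactly by $\Pi_L$ and contributes nothing. With this case excluded, the estimate can be taken as given and the remaining steps are routine.
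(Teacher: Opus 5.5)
Your proposal is correct and is essentially the paper's own proof. The paper also takes $\tilde v=\Pi_L x(y)$ in the near-best estimate, uses the $A_0$-orthogonal expansion exactly as in Corollary~\ref{cor:rembound}, and bounds each $\|(I-\Pi_L)\gamma_k\|_{L^\infty(-1,1)}$ by Theorem~\ref{thm:nearbest} combined with \eqref{eq:gamma_best}. Your separate treatment of $\lambda_k=0$ fills a small point the paper leaves implicit, since its derivation of \eqref{eq:gamma_best} divides by $\lambda_k$.
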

\begin{proof}  The proof is identical to the proof of Corollary~\ref{cor:rembound} except for applying the near-best approximation result of Theorem~\ref{thm:nearbest} and the bound \eqref{eq:gamma_best} to estimate $\| (I-\Pi_L) \gamma_k\|_{L^\infty(-1,1)}$. 
\end{proof}
The behaviour of the estimate \eqref{eq:nb_lag} when $N\rightarrow \infty$ depends on the Lebesgue constant. If the point set is such that the Lebesgue constant grows sub-exponentially, e.g., Chebyshev interpolation points, the estimate converges exponentially to zero for any $0< r <1$. However, it does not explain the performance of the Lagrange subspace ROM, e.g., for equidistant sample points, for which the Lebesgue constant blows up. 

\subsubsection{Local interpolation error analysis} 
\label{sec:local}

In this section, we review Lagrange ROM a priori error analysis from \cite{MaPaTu:02} that is based on local interpolation and the remainder theorem. The aim in using local interpolation is to prove convergence result for arbitrary point sets.  Let $y\in (-1,1)$ and $S(y)\subset \{y_i\}_{i=1}^N$ be the subset of points at distance at most $\tfrac{1}{2}\Delta>0$ from $y$, i.e., 
\begin{equation}
\label{eq:setS}
    S(y) = \left\{ v \in \{ y_i \}_{i=1}^N \; | \; |v-y|\leq \frac{1}{2}\Delta \;\right\}.
\end{equation}
The local interpolation operator corresponding to point $y$ is denoted by $\Pi_{\Delta}(y,\{y_i\}_{i=1}^N)$ and defined as
\begin{equation*}
    \Pi_{\Delta}(y,\{y_i\}_{i=1}^N) = \Pi_L(S(y)).  
\end{equation*}
Let $a$ and $b$ be the smallest and largest elements in $S(y)$. By the remainder theorem on $(a,b)$, $\Pi_{\Delta}(y)$ satisfies the error estimate  
\begin{equation*}
    \max_{t\in [a,b]}\|f(t)- (\Pi_{\Delta}(y)f)(t) \|_{L^\infty(a,b)} \leq \frac{1}{N_\Delta!}\max_{t\in{[a,b]}} \left| \prod_{v \in S(y)} (t-v) \right| \| f^{(N_\Delta)} \|_{L^\infty(a,b)},
\end{equation*}
where $N_\Delta$ is the number of elements in $S(y)$. The idea of local interpolation is that by \eqref{eq:setS} it holds that $\max_{t\in{(a,b)}} \left| \prod_{v \in S(y)} (y-v) \right| \leq \Delta^{N_\Delta}$. Thus, by  \eqref{eq:gammaDN}
\begin{equation}
\label{eq:locbound}
    \|(I- \Pi_{\Delta}) \gamma_k \|_{L^\infty(a,b)} \leq   \frac{ \Delta^{N_\Delta} (1-r)^{N_\Delta}}{r^{N_\Delta+1}}.
\end{equation}

\begin{corollary} \label{cor:locbound}
Let $\{y_i\}_{i=1}^N \subset (-1,1)$ be distinct snapshot points and $V(\{y_i\}_{i=1}^N)$ the Lagrange subspace defined in \eqref{eq:Lagrange}. In addition, let $x$ be the solution to \eqref{eq:plinsys}, and $\tilde{x}(y)$ the subspace solution to \eqref{eq:plinsys} from $V(\{y_i\}_{i=1}^N)$. Then 
\begin{equation}
\label{eq:locbound2}
    \frac{\| x(y) - \tilde{x}(y)\|_{A_0}}{\| x(0)\|_{A_0}} \leq 
    \kappa \frac{\Delta^{N_\Delta}(1-r)^{N_\Delta}}{r^{N_\Delta+1}} \quad \mbox{for any $y\in(-1,1)$}.
\end{equation}
Here $N_\Delta$ is the number of elements in $S(y)$.
\end{corollary}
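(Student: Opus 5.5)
The plan follows the proof of Corollary~\ref{cor:rembound}, with the global interpolant replaced by the local interpolant $\Pi_\Delta(y)\equiv\Pi_\Delta(y,\{y_i\}_{i=1}^N)=\Pi_L(S(y))$. Fix $y\in(-1,1)$ and take
\begin{equation*}
\tilde v = (\Pi_\Delta(y) x)(y) = \sum_{v\in S(y)} \ell_v(y)\, x(v),
\end{equation*}
where $\ell_v$ are the Lagrange basis polynomials for the point set $S(y)$. This is a linear combination of snapshots with $v\in S(y)\subset\{y_i\}_{i=1}^N$, so $\tilde v\in V(\{y_i\}_{i=1}^N)$. The near-best approximation property \eqref{eq:A0approx} then gives
\begin{equation*}
\|x(y)-\tilde x(y)\|_{A_0}\le \kappa\,\|x(y)-(\Pi_\Delta(y)x)(y)\|_{A_0}.
\end{equation*}
Here $S(y)$ is a fixed, $y$-dependent point set, and the interpolant is evaluated only at the single point $y$.

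Next, insert the solution formula \eqref{eq:solution_formula}. Interpolation acts componentwise on the coefficients, so $x-\Pi_\Delta(y)x=\sum_k [(I-\Pi_\Delta(y))\gamma_k]\beta_k v_k$. By $A_0$-orthonormality of $\{v_k\}$,
\begin{equation*}
\|x(y)-(\Pi_\Delta(y)x)(y)\|_{A_0}^2=\sum_{k=1}^m [((I-\Pi_\Delta(y))\gamma_k)(y)]^2\beta_k^2\le \max_k |((I-\Pi_\Delta(y))\gamma_k)(y)|^2\,\|x(0)\|_{A_0}^2,
\end{equation*}
using $\sum_k\beta_k^2=\|x(0)\|_{A_0}^2$.

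It remains to bound $|((I-\Pi_\Delta(y))\gamma_k)(y)|$ uniformly in $k$ by $\Delta^{N_\Delta}(1-r)^{N_\Delta}/r^{N_\Delta+1}$. This is the main obstacle, because the remainder-theorem bound preceding \eqref{eq:locbound} is stated on $[a,b]$, and $y$ need not lie in $[a,b]$. To handle this, I would apply the remainder theorem directly at the point $t=y$: the error equals $\frac{1}{N_\Delta!}\prod_{v\in S(y)}(y-v)\,\gamma_k^{(N_\Delta)}(\xi)$ for some $\xi$ in the convex hull of $S(y)\cup\{y\}\subset[-1,1]$, so the theorem still applies.

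Each factor satisfies $|y-v|\le\Delta/2\le\Delta$ by \eqref{eq:setS}, so the product is at most $\Delta^{N_\Delta}$. Estimate \eqref{eq:gammaDN} then bounds $|\gamma_k^{(N_\Delta)}(\xi)|/N_\Delta!$ by $(1-r)^{N_\Delta}/r^{N_\Delta+1}$, which is valid since $\xi\in[-1,1]$ and $\lambda_k\le 1-r$ by \eqref{eq:eigenbound}. This reproduces \eqref{eq:locbound} at the point $y$. Combining the three steps and dividing by $\|x(0)\|_{A_0}$ gives \eqref{eq:locbound2}. Finally, if $S(y)$ is empty the bound is meaningless with $N_\Delta=0$, so one should note that $\tilde v=0$ is then used; in that case $\|x(y)\|_{A_0}\le \max_k|\gamma_k(y)|\,\|x(0)\|_{A_0}\le r^{-1}\|x(0)\|_{A_0}$, which is consistent with the formula at $N_\Delta=0$.
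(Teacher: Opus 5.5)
Your proof is correct and follows the paper's own argument, which simply repeats the proof of Corollary~\ref{cor:rembound} with $\Pi_L$ replaced by the local interpolant $\Pi_\Delta(y)$ and with the bound \eqref{eq:locbound}. Applying the remainder theorem pointwise at $t=y$, with $\xi$ between $y$ and the nodes of $S(y)$, is a worthwhile refinement, since \eqref{eq:locbound} is only stated on $[a,b]$, which need not contain $y$; your treatment of the case $S(y)=\emptyset$ is also consistent with the stated bound.
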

\begin{proof}  The proof is identical to the proof of Corollary~\ref{cor:rembound} except for applying the local interpolation operator and the error bound \eqref{eq:locbound}. %Convergence requires that $\Delta$ satisfies the condition
\end{proof}

The bound given in Corollary~\ref{cor:locbound} eliminates the need to use structured interpolation point sets. However, convergence requires that 
\begin{equation*}
    \Delta (1-r) < r \quad \quad \mbox{so that} \quad \Delta < \frac{r}{1-r}.
\end{equation*}
That is, the required number of points grows rapidly for small $r$. The article \cite{MaPaTu:02} considers positive perturbations and does not address this issue. 

In addition to local interpolation \cite{MaPaTu:02} proposes exponentially spaced snapshot points. In numerical experiments, exponential snapshot points seems to improve the accuracy of the ROM especially if the parameter range is large; see the results in \cite{Ve:03}.
\subsection{Proper orthogonal decomposition}

ROMs based on the Lagrange subspace $V(\{y_i\}_{i=1}^N)$ are not commonly used in practical computations. Instead, one applies POD or the greedy method to find a reduced subspace that yields sufficiently accurate model that has a smaller dimension. The POD method for our model problem is studied in \cite{KaVo:07}. In that work, POD is analysed as a method for finding an optimal $r$-dimensional subspace that minimises the discrete $L^2$-norm of the error
\begin{equation}
\label{eq:DL2error}
\sum_{i=1}^N \|x(y_i)-\tilde{x}_{\rm POD}(y_i)\|_{A_0}^2 w_i,
\end{equation}
over all $r$-dimensional subspaces. Here $\tilde{x}_{\rm POD}$ is the POD subspace solution and $\{w_i\}_{i=1}^N \subset \mathbb{R}^+$ are some positive quadrature weights. Let $X\in \mathbb{R}^{m\times N}$ be the snapshot matrix and $W\in \mathbb{R}^{N\times N}$ be the weight matrix defined as
\begin{equation}\label{eq:XW}
X = \begin{bmatrix} x(y_1) & x(y_2) & \cdots & x(y_N) \end{bmatrix} \quad \mbox{and} \quad W = \mathop{diag}(\sqrt{w_1},\ldots,\sqrt{w_N}).
\end{equation}
In \cite{KaVo:07} it is proven that the $r$-dimensional subspace minimising \eqref{eq:DL2error} is obtained from $r$ left singular vectors of $XW$ with respect to $A_0$-inner product. In addition, \cite{KaVo:07} gives an error estimate for \eqref{eq:DL2error} in terms of singular values of the matrix $XW$ and bounds the difference of POD subspace solutions obtained using different point sets. 

Before proceeding, we need to introduce some notation. Let $L$ be a Cholesky factor of $A_0$ s.t. $A_0 = L^TL$ and define the matrix norm $\| \cdot \|_{2,A_0}$ as
\begin{equation}
\label{eq:2A0norm}
\| B \|_{2,A_0} = \max_{x\in\mathbb{R}^N \setminus \{0\} } \frac{\| Bx \|_{A_0} }{\|x\|_2} = \max_{x\in\mathbb{R}^N \setminus \{0\} } \frac{\| LBx \|_{2} }{\|x\|_2} \quad \mbox{for any $B \in \mathbb{R}^{m \times N}$.}
\end{equation}
By definition, $\| B \eta \|_{A_0} \leq \| B \|_{2,A_0} \| \eta \|_2$ for any $\eta \in \mathbb{R}^N$ and $B\in\mathbb{R}^{m\times N}$. The POD subspace is defined as follows. 
\begin{definition} \label{def:VPOD} Let $\{y_i\}_{i=1}^N \subset (-1,1)$, $\{w_i\}_{i=1}^N \subset \mathbb{R}^+$, $X$ and $W$ be as defined in (\ref{eq:XW}) and $\tilde{X} = XW$. In addition, let $\tilde{X}_r$ be the best rank $r$ approximation to $\tilde{X}$ in the $\| \cdot \|_{2,A_0}$ norm, and $\sigma_{r+1}$ such that $\| \tilde{X}-\tilde{X}_r\|_{2,A_0} = \sigma_{r+1}$. Then $V_{\rm POD} \equiv V_{\rm POD}(r,\{y_i\}_{i=1}^N,\{w_i\}_{i=1}^N)$ is defined as 
\begin{equation}
\label{eq:PODdef}
V_{\rm POD}=R(\tilde{X}_r),
\end{equation}
where $R$ denotes the range of a matrix.
\end{definition}
Next, we discuss how the best rank $r$ approximation to $\tilde{X}$ in $\|\cdot\|_{2,A_0}$ -norm is computed. By \eqref{eq:2A0norm} it holds that $\| \tilde{X} - \tilde{X}_r \|_{2,A_0} = \| L \tilde{X} - L \tilde{X}_r \|_2$. Thus, the matrix $\tilde{X}_r$ can be obtained by first finding the best rank $r$ approximation $Y_r$ of $L\tilde{X}$ in the $2$-norm and then setting $\tilde{X}_r = L^{-1} Y_r$. The low rank approximation $Y_r$ is obtained from a truncated singular value decomposition (SVD) of $L\tilde{X}$ as usual. Then 
\begin{equation*}
\label{eq:def_sr}
\|\tilde{X}-\tilde{X}_r\|_{2,A_0} = \sigma_{r+1},
\end{equation*}
where $\sigma_{r+1}$ is the $r+1$ largest singular value of $L\tilde{X}$.

We proceed to consider the special case $W=I$ that leads to a POD variant that can be easily analysed using Lagrange interpolation  technique. Observe, that in this case $\tilde{X} = X$. Other choices of $W$ are considered later in Theorem~\ref{thm:poderror} that gives a POD error estimate in the $L^2(-1,1;\mathbb{R}^m_{A_0})$ norm using our approximation operator. The following Lemma gives an error estimate that links the POD error for $W=I$ to the accuracy of Lagrange interpolation, the Lebesgue constant, and singular value $\sigma_{r+1}$ of $LX$. 
\begin{lemma} Use the same notation as in 
Definition \ref{def:VPOD}. Let $W = I$, $x(y)\in\mathbb{R}^m$ be the solution to \eqref{eq:plinsys}, and $\tilde{x}_{\rm POD}(y)\in V_{\rm POD}$ be the subspace solution to \eqref{eq:plinsys} from $V_{\rm POD}$. Then
\begin{equation*}
\| x(y) - \tilde{x}_{\rm POD}(y)\|_{A_0} \leq \ \kappa \| (I-\Pi_L)x\|_{L^\infty(-1,1;\mathbb{R}^m_{A_0})} + \kappa\sigma_{r+1} \Lambda_N.
\end{equation*}
for any $y\in(-1,1)$. Here $\Lambda_N$ is the Lebesgue constant  related to the point set $\{y_i\}_{i=1}^N$, $\sigma_{r+1}$ is as defined in \eqref{eq:def_sr}, and $\kappa$ as defined in \eqref{eq:kappadef}.
\end{lemma}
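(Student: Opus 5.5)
The plan is to combine the near-best approximation property \eqref{eq:A0approx} with a specific choice of $\tilde v \in V_{\rm POD}$ built by applying the Lagrange interpolation formula to the columns of the rank-$r$ approximation $\tilde{X}_r = X_r$ instead of to the exact snapshots. Since $W=I$, we have $\tilde X = X$, whose $i$-th column is $x(y_i)$. Write $X_r$ for the best rank-$r$ approximation and $x_r^{(i)} = X_r e_i$ for its $i$-th column, which lies in $R(X_r) = V_{\rm POD}$. For fixed $y\in(-1,1)$ set
\begin{equation*}
\tilde v(y) = \sum_{i=1}^N \ell_i(y)\, x_r^{(i)} = X_r\, \ell(y), \qquad \ell(y) = (\ell_1(y),\ldots,\ell_N(y))^T .
\end{equation*}
This is a linear combination of columns of $X_r$, so $\tilde v(y)\in V_{\rm POD}$. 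By \eqref{eq:A0approx},
\begin{equation*}
\| x(y)-\tilde x_{\rm POD}(y)\|_{A_0} \le \kappa\, \| x(y) - \tilde v(y)\|_{A_0}.
\end{equation*}

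Next, split with the triangle inequality, using $(\Pi_L x)(y) = X\ell(y)$:
\begin{equation*}
\| x(y) - \tilde v(y)\|_{A_0} \le \| x(y) - (\Pi_L x)(y)\|_{A_0} + \| (X - X_r)\ell(y)\|_{A_0}.
\end{equation*}
The first term is at most $\|(I-\Pi_L)x\|_{L^\infty(-1,1;\mathbb{R}^m_{A_0})}$. For the second, the definition \eqref{eq:2A0norm} and Definition~\ref{def:VPOD} give
\begin{equation*}
\|(X-X_r)\ell(y)\|_{A_0} \le \sigma_{r+1} \|\ell(y)\|_2 .
\end{equation*}

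It remains to bound $\|\ell(y)\|_2$ by $\Lambda_N$. This is the only nontrivial step, and it is still easy. We have $\|\ell(y)\|_2 \le \|\ell(y)\|_1 = \sum_i |\ell_i(y)|$. The Lebesgue constant equals $\max_{y\in[-1,1]}\sum_i|\ell_i(y)|$, which is the standard identification of the operator norm of $\Pi_L$ on $C([-1,1])$. One shows it by choosing a continuous $f$ with $\|f\|_{L^\infty}\le 1$ and $f(y_i)=\mathrm{sign}(\ell_i(y))$, for example by piecewise linear interpolation. Hence $\|\ell(y)\|_2 \le \Lambda_N$ for every $y$.

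Combining the three displays gives the claimed bound for every $y\in(-1,1)$. The main point of care is the identification $\Lambda_N=\max_y \sum_i|\ell_i(y)|$; if preferred, one can state the lemma with $\max_y\|\ell(y)\|_1$ directly, which is bounded by $\Lambda_N$ as above.
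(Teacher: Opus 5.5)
Your proposal is correct and follows the paper's proof step by step. The paper also takes $\tilde v = \tilde X_r \eta$ with $\eta = (\ell_1(y),\ldots,\ell_N(y))^T$, adds and subtracts $X\eta = (\Pi_L x)(y)$, and uses $\|X-\tilde X_r\|_{2,A_0}=\sigma_{r+1}$ together with $\|\eta\|_2\le\|\eta\|_1\le\Lambda_N$. The only difference is that you prove $\sum_i|\ell_i(y)|\le\Lambda_N$ directly with a continuous sign-matching function, whereas the paper simply cites the Lebesgue-function literature for this standard fact.
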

\begin{proof} Let $y\in(-1,1)$. By the near-best approximation property in \eqref{eq:A0approx}, the POD solution satisfies 
\begin{equation*}
\| x(y) - \tilde{x}_{\rm POD}(y) \|_{A_0} \leq \kappa \| x(y)-\tilde{v} \|_{A_0} \quad \mbox{for any $\tilde{v} \in V_{\rm POD}$}.
\end{equation*}
By definition of $V_{\rm POD}$, any $\tilde{v} \in V_{\rm POD}$ has the representation $\tilde{v} = \tilde{X}_r \eta$ for some $\eta \in \mathbb{R}^N$. Adding and subtracting $X\eta$ and applying the triangle inequality gives
\begin{equation}
\label{eq:POD_intermediate1}
\| x(y) - \tilde{x}_{\rm POD}(y) \|_{A_0} \leq 
\kappa\| x(y) - X\eta \|_{A_0} + 
\kappa\| (X-\tilde{X}_r) \|_{2,A_0} \|\eta\|_2. 
\end{equation}
for any $\eta \in \mathbb{R}^N$. We choose 
\begin{equation*}
\eta = \begin{bmatrix} \ell_1(y) & \cdots & \ell_N(y)
\end{bmatrix}^T \quad \mbox{so that} \quad  X \eta = (\Pi_L x)(y).
\end{equation*}
Thus, the first term on the RHS of \eqref{eq:POD_intermediate1} satisfies $\| x(y) - X\eta \|_{A_0} = \| [(I-\Pi_L)x](y) \|_{A_0}$. We proceed to estimate the second term. By norm equivalence between vector norms and properties of the Lebesgue constant we have that 
\begin{equation*}
\| \eta \|_2 \leq \| \eta \|_1 = \sum_{i=1}^N | \ell_i(y)| \leq \Lambda_N,
\end{equation*}
see \cite{brutman1997lebesgue}. Recalling that in the case $W=I$ we have $\|X-\tilde{X}_r\|_{2,A_0}  = \sigma_{r+1}$ completes the proof. 
\end{proof}
\subsection{Greedy method}
The greedy method aims to find a subspace that minimises the approximation error,
\begin{equation*}
\sup_{y\in(-1,1)} \| x(y)-\tilde{x}(y)\|_{A_0}, 
\end{equation*}
where $\tilde{x}(y)$ is the greedy subspace solution. A family of greedy subspaces $\{V_k\}$ is constructed from a sample set $\{y_i\}_{i=1}^N \subset (-1,1)$ as follows. On step $k$ one seeks $y^* \in \{y_i\}_{i=1}^N$ that maximizes the error
\begin{equation}
\label{eq:proj_error}
\|x(y_i)-\tilde{x}_{k-1}(y_i)\|_{A_0},
\end{equation}
where $\tilde{x}_{k-1}(y)$ is the subspace solution from the previous greedy subspace $V_{k-1}$. Then $V_k$ is obtained as 
\begin{equation*}
V_{k} = V_{k-1} \oplus \mathop{span}(x(y^*)).
\end{equation*}
The convergence of this process is analysed, e.g., in \cite{CoDe:15}. The analysis utilises Lipschitz continuity of the parameter-to-solution map and establishes that reaching $\epsilon$-accuracy requires the use of an $\epsilon$-net for the sample set $\{y_i\}_{i=1}^N$. This requirement leads to a very large number of sampling points, unless the Kolmogorov $n$-width of the PDE coefficient set is small. 

Evaluating the exact error in \eqref{eq:proj_error} requires multiple solutions of the PDE. The weak greedy variant reduces this computational cost by replacing the exact error by an a posteriori error estimator, see \cite{BiCoDa:11,BuMaPa:12}.

\section{Approximation from the snapshot subspace} \label{sec:approximation}
In this section, we develop a novel method for approximating a function based on its pointwise values. These results are general and not directly related to the ROM setting. The approximation method is used in Section~\ref{sec:error} to derive new Lagrange and POD ROM a priori error estimates under very mild conditions on the sample set. In particular, we design a family of approximation operators $\Pi_n : C([-1,1];\mathbb{R}^m_{A_0}) \rightarrow \mathcal{P}^n([-1,1];\mathbb{R}^m)$ that have the following properties:

\renewcommand{\labelenumi}{(\roman{enumi})}

\begin{enumerate}
\item Snapshot approximation:
\begin{equation}
\label{eq:sn_approx}
    (\Pi_n f)(y) = \sum_{i=1}^N \alpha_i(y) f(y_i),
\end{equation}
for coefficient functions $\alpha_i \in \mathcal{P}^n([-1,1])$ that are independent of $f$ and depend only on the degree $n$ and the set of snapshot points $\{ y_i \}_{i=1}^N$. 
\item Degree-$n$ polynomial exactness:  \begin{equation}
\label{eq:polynomial_exact}
    \Pi_n f  = f
\end{equation}
for any  $f \in \mathcal{P}^n([-1,1];\mathbb{R}^m)$.
\item Stability: 
\begin{equation}
\label{eq:stability}
    \| \Pi_n f\|_{L^2(-1,1;\mathbb{R}^m_{A_0})}\leq C_{\rm s}(n) \| f \|_{L^\infty(-1,1;\mathbb{R}^m_{A_0})}
\end{equation}
for any $f \in C([-1,1];\mathbb{R}^m_{A_0})$. In order to use this bound, we explicitly specify the dependence of $C_{\rm s}(n)$ on $n$.
\end{enumerate}

This section is structured as follows. First, Lemma~\ref{lemma:cond} establishes that $\Pi_n$ is degree-$n$ polynomially exact if the coefficient vector $\alpha(y)$ is a solution to an under-determined linear system. Then, Lemma~\ref{lemma:existence} proves that this linear system has a solution, if the point set $\{y_i\}_{i=1}^N$ satisfies the  Marcinkiewicz–Zygmund (MZ) type inequality
\begin{equation}
\label{eq:cond0}
\max_{q\in \mathcal{P}^n([-1,1])\setminus\{0\}}\left| \frac{ \int_{-1}^1 q(y)^2\;dy-\sum_{i=1}^N q(y_i)^2w_i}{\int_{-1}^1 q(y)^2\;dy } \right| < 1,
\end{equation}
for some positive weights $w_i$. That is, there exists some quadrature rule with positive weights associated to the point set that can integrate second powers of degree $n$ polynomials with sufficient relative accuracy. In addition, Lemma~\ref{lemma:existence} gives an estimate for the norm of $\alpha$ that is needed to derive an upper bound for the stability constant $C_{\rm s}(n)$. Theorem~\ref{thm:mainresult} collects these results and gives a condition for the point set that guarantees the existence of an approximation operator that satisfies (i)-(iii). Near-best approximation result follows immediately and is stated in Corollary~\ref{cor:ip_error}. Then Lemmas~\ref{lemma:stab_gauss} and \ref{lemma:stab_random} study the condition~\eqref{eq:cond0} for the family of Gaussian quadrature points and arbitrarily chosen point sets, respectively.

The challenge in defining the operator $\Pi_n$ is to construct the coefficient vector $\alpha(y)$. Interestingly, in our construction, the elements of the coefficient vector are degree $n$ polynomials that are \emph{independent of $f$} and depend only on the polynomial degree $n$ as well as on the snapshot points $\{y_i\}_{i=1}^N$. The stability property \eqref{eq:stability} follows by bounding the norm of the coefficient vector $\alpha(y)$. It turns out that it is natural to use the norm $\int_{-1}^1 \| W^{-1} \alpha \|_2^2\; dy$ where $W\in \mathbb{R}^{N\times N}$ is a diagonal weight matrix defined as in (\ref{eq:XW}) for $w_i>0$. This matrix is an analytical tool that is not needed in any computation, but the weights appear in the MZ-condition \eqref{eq:MZcond}.

We proceed to study the existence of a coefficient function $\alpha:(-1,1) \rightarrow \mathbb{R}^{N}$ such that the associated approximation operator $\Pi_n$ satisfies \eqref{eq:polynomial_exact} and estimate the norm $\int_{-1}^1\| W^{-1} \alpha \|^2_2\; dy$. The analysis uses the discrete $L^2(-1,1)$ inner product
\begin{equation*}
    (g,f)_D:= \sum_{i=1}^Ng(y_i)f(y_i) w_i.
\end{equation*}
We make the following standing assumption
\begin{assumption} \label{ass:ip} Let $\{y_i\}_{i=1}^N\subset (-1,1)$ be a snapshot point set and $\{w_i\}_{i=1}^N \subset \mathbb{R}_+$ positive weights. Assume that 
\begin{equation*}
(g,f)_D:= \sum_{i=1}^N g(y_i)f(y_i) w_i
\end{equation*}
is an inner product in $\mathcal{P}^n([-1,1])$ and the quadrature rule $(\{y_i\},\{w_i\})$ is exact for constant functions, i.e., $\sum_{i=1}^N w_i = 2$. 
\end{assumption}
\noindent This assumption is very mild, i.e., it states that there must exist $n+1$ unique snapshot points in $\{y_i\}_{i=1}^N$ so that condition $(g,g)_D > 0$ holds in $\mathcal{P}^n([-1,1])$ and $(\cdot,\cdot)_D$ is an inner product in $\mathcal{P}^n([-1,1])$. The assumption on quadrature rule being exact for constant functions is made to obtain simple multiplicative constants in the following analysis. 

Let $\{P_j\}_{j=0}^{n}$ be an $(\cdot,\cdot)_D$-orthogonal basis  of $\mathcal{P}^n([-1,1])$ that is normalized as
\begin{equation}
\label{eq:Pnorm}
    \int_{-1}^1 P_j(y)^2 \;dy =1 \quad \mbox{for any $j \in \{0,\ldots,n\}$}.
\end{equation} 
These polynomials can be constructed using the Gram-Schmidt process similar to Legendre polynomials, see \cite{gautschi2004orthogonal}. We begin with a Lemma that gives a condition for the existence of coefficient function $\alpha \in \mathcal{P}^n([-1,1];\mathbb{R}^N)$ such that the associated approximation operator is degree-$n$ polynomially exact. Let 
\begin{equation*}
    \Phi \equiv \Phi(\{y_i\}_{i=1}^N) = \begin{bmatrix} P_0(y_1) & \cdots & P_0(y_N) \\ \vdots & \vdots& \vdots \\P_n(y_1) & \cdots & P_n(y_N)
    \end{bmatrix} \quad \mbox{and} \quad   
    \psi(y) = \begin{bmatrix} P_0(y) \\ \vdots \\ P_n(y) \end{bmatrix}.
\end{equation*}
The next lemma proves that $\Pi_n$ is degree-$n$ polynomially exact if the coefficient vector $\alpha(y)$ is a solution to an under-determined linear system \eqref{eq:alphacond}.
\begin{lemma}\label{lemma:cond} Assume that for any $y\in(-1,1)$ there exists $\alpha(y) \in \mathbb{R}^N$ such that
\begin{equation}
\label{eq:alphacond}
    \Phi\alpha(y) = \psi(y). \quad 
\end{equation}
Then $\Pi_n$ s.t. $(\Pi_n f)(y) = \sum_{i=1}^N \alpha_i(y) f(y_i)$ is degree-$n$ polynomially exact, i.e., 
\begin{equation*}
    \Pi_n f = f \quad \mbox{for any $f\in \mathcal{P}^n([-1,1];\mathbb{R}^m)$}.
\end{equation*}
\end{lemma}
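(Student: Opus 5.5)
The plan is to reduce the vector-valued claim to the scalar basis polynomials $P_j$. Then we use linearity of $\Pi_n$ and the fact that $\{P_j\}_{j=0}^n$ is a basis of $\mathcal{P}^n([-1,1])$. Under Assumption~\ref{ass:ip}, $(\cdot,\cdot)_D$ is an inner product on $\mathcal{P}^n([-1,1])$. The $P_j$ are nonzero and mutually $(\cdot,\cdot)_D$-orthogonal, hence linearly independent. There are $n+1$ of them, so they form a basis.

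First I will read off what \eqref{eq:alphacond} says componentwise. The $j$-th row of $\Phi\alpha(y)=\psi(y)$ reads
\begin{equation*}
\sum_{i=1}^N \alpha_i(y) P_j(y_i) = P_j(y), \qquad j=0,\ldots,n,
\end{equation*}
for every $y\in(-1,1)$. In other words, $(\Pi_n P_j)(y) = P_j(y)$, so $\Pi_n$ reproduces every basis polynomial on $(-1,1)$. Continuity extends this to $[-1,1]$ if needed, since both sides are polynomials once $\alpha$ is polynomial; otherwise we simply state the identity on $(-1,1)$, which is the set where the operator is evaluated.

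Next, let $f\in\mathcal{P}^n([-1,1];\mathbb{R}^m)$. Each component $f_\ell$ is a scalar polynomial of degree at most $n$. Writing $f_\ell=\sum_{j} c_{\ell j}P_j$, we can express $f(y)=\sum_{j=0}^n c_j P_j(y)$ with coefficient vectors $c_j\in\mathbb{R}^m$. By linearity of the map $f\mapsto \sum_i\alpha_i(y) f(y_i)$, we get
\begin{equation*}
(\Pi_n f)(y)=\sum_{i=1}^N\alpha_i(y)\sum_{j=0}^n c_jP_j(y_i)=\sum_{j=0}^n c_j\sum_{i=1}^N\alpha_i(y)P_j(y_i)=\sum_{j=0}^n c_jP_j(y)=f(y).
\end{equation*}
This proves the claim.

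The only step needing care is the basis claim, i.e.\ that the $P_j$ span $\mathcal{P}^n$. That follows from Assumption~\ref{ass:ip} as explained above, so I expect no real obstacle.
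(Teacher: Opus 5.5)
Your proof is correct and follows essentially the same route as the paper: you expand each component of $f$ in the basis $\{P_j\}_{j=0}^n$, observe that the $j$-th row of $\Phi\alpha(y)=\psi(y)$ says exactly $\sum_{i=1}^N\alpha_i(y)P_j(y_i)=P_j(y)$, and conclude by linearity. Your extra justification that the $P_j$ form a basis (nonzero, mutually $(\cdot,\cdot)_D$-orthogonal, $n+1$ of them) is a harmless addition, since the paper takes this as part of the definition of $\{P_j\}$.
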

\begin{proof} Let $f\in \mathcal{P}^n([-1,1];\mathbb{R}^m)$. The condition \eqref{eq:polynomial_exact} has to hold componentwise, i.e. 
\begin{equation}
\label{eq:cond1}
\sum_{i=1}^N \alpha_i(y)  f_k(y_i) = f_k(y) \quad \mbox{for any $y\in(-1,1)$ and $k\in\{1,\ldots,m\}$}.
\end{equation}
Expand the polynomials $f_k$ in the basis $\{P_j\}$ as 
\begin{equation}
\label{eq:Leg}
f_{k}(y) =\sum_{j=0}^n \eta_{kj} P_j(y),
\end{equation}
where $\eta_{kj} \in \mathbb{R}$ are the expansion coefficients.  By expansion \eqref{eq:Leg}, the condition in \eqref{eq:cond1} is equivalent to
\begin{equation*}
 \sum_{j=0}^n \eta_{kj} \sum_{i=1}^N  \alpha_i(y)  P_j(y_i) = 
\sum_{j=0}^n  \eta_{kj} P_j(y)
\end{equation*}
for any $y\in(-1,1)$ and $k\in\{1,\ldots,m\}$. A possible solution  is to choose $\alpha(y)$ satisfying
\begin{equation*}
\sum_{i=1}^N  \alpha_i(y)  P_j(y_i) = 
P_j(y) ,
\end{equation*}
for any $j\in \{0,\ldots,n\}$ and $y\in(-1,1)$. This is the linear system in \eqref{eq:alphacond}.
\end{proof}
Next, we show that there exists a solution to the linear system \eqref{eq:alphacond} if the quadrature rule $(\{y_i\}_{i=1}^N,\{w_i\}_{i=1}^N)$ related to the snapshot point set is sufficiently accurate for second powers of degree $n$ polynomials. In addition, we estimate the norm of $\alpha(y)$. 
\begin{lemma}
\label{lemma:existence} Let $\Phi \in \mathbb{R}^{(n+1)\times N}$, $\psi(y) \in \mathbb{R}^{n+1}$ be as defined in \eqref{eq:alphacond} and define $e \equiv e(\{y_i\}_{i=1}^N,\{w_i\}_{i=1}^N,n)$ as 
\begin{equation}
\label{eq:MZcond}
e = \max_{q\in \mathcal{P}^n([-1,1])\setminus\{0\}}\left| \frac{ \int_{-1}^1 q(y)^2\;dy-\sum_{i=1}^N q(y_i)^2w_i}{\int_{-1}^1 q(y)^2\;dy } \right|.
\end{equation}
If $e<1$, then there exists $\alpha \in \mathcal{P}^n([-1,1];\mathbb{R}^N)$ such that 
\begin{equation}
\label{eq:alpha_ls}
\Phi \alpha(y) = \psi(y) \quad \mbox{for any $y\in (-1,1)$}
\end{equation}
and 
\begin{equation}
\label{eq:alpha_stab}
\int_{-1}^1 \| W^{-1} \alpha(y) \|^2_2 \; dy 
\leq  \frac{n+1}{1-e}.
\end{equation} 

\end{lemma}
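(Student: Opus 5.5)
The plan is to write down an explicit solution of \eqref{eq:alpha_ls}: the minimum-norm solution with respect to the weighted norm $\|W^{-1}\cdot\|_2$. With $W=\mathop{diag}(\sqrt{w_1},\ldots,\sqrt{w_N})$ as in \eqref{eq:XW}, we have $W^2=\mathop{diag}(w_1,\ldots,w_N)$. The minimizer of $\|W^{-1}\alpha\|_2$ subject to $\Phi\alpha=\psi(y)$ is
\begin{equation*}
\alpha(y) = W^2\Phi^T G^{-1}\psi(y), \qquad G := \Phi W^2 \Phi^T .
\end{equation*}
The key observation is that $G$ is the Gram matrix of $\{P_j\}_{j=0}^n$ in the discrete inner product. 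Indeed, $[G]_{jk} = \sum_{i=1}^N P_j(y_i)P_k(y_i)w_i = (P_j,P_k)_D$. Since the $P_j$ are $(\cdot,\cdot)_D$-orthogonal, $G=\mathop{diag}(d_0,\ldots,d_n)$ with $d_j=(P_j,P_j)_D$.

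The first step is to bound the $d_j$ from below using $e<1$. Taking $q=P_j$ in \eqref{eq:MZcond} and using the normalization \eqref{eq:Pnorm} gives $|1-d_j|\le e$, hence
\begin{equation*}
d_j \ge 1-e > 0 .
\end{equation*}
So $G$ is invertible, and this holds even without invoking Assumption~\ref{ass:ip}. Componentwise the construction reads
\begin{equation*}
\alpha_i(y) = w_i\sum_{j=0}^n \frac{P_j(y_i)P_j(y)}{d_j}.
\end{equation*}
Each $\alpha_i$ is therefore a degree-$n$ polynomial in $y$, so $\alpha\in\mathcal{P}^n([-1,1];\mathbb{R}^N)$. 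Exactness is immediate: $\Phi\alpha(y) = GG^{-1}\psi(y)=\psi(y)$ for every $y$.

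The second step is the stability bound \eqref{eq:alpha_stab}. We have $W^{-1}\alpha(y) = W\Phi^TG^{-1}\psi(y)$, so
\begin{equation*}
\|W^{-1}\alpha(y)\|_2^2 = \psi(y)^TG^{-1}\Phi W^2\Phi^T G^{-1}\psi(y) = \psi(y)^T G^{-1}\psi(y) = \sum_{j=0}^n \frac{P_j(y)^2}{d_j}.
\end{equation*}
Integrating over $(-1,1)$ and using \eqref{eq:Pnorm} gives $\sum_{j=0}^n d_j^{-1}$. By the first step this is at most $(n+1)/(1-e)$.

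There is no real obstacle in this argument. The only point needing care is recognizing that $G$ is diagonal, which follows from the $(\cdot,\cdot)_D$-orthogonality of the basis. Once that is seen, both the invertibility of $G$ and the norm bound reduce to the lower bound $d_j\ge 1-e$.
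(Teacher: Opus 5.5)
Your proposal is correct and is essentially the paper's argument. The paper takes the same minimum-weighted-norm (pseudoinverse) solution $W^{-1}\alpha(y)=V\Sigma^{-1}U^T\psi(y)$ via the thin SVD $\Phi W=U\Sigma V^T$, which coincides with your $W\Phi^TG^{-1}\psi(y)$, and it uses the same diagonal Gram matrix and lower bound $d_j\ge 1-e$. Your exact identity $\int_{-1}^1\|W^{-1}\alpha\|_2^2\,dy=\sum_j d_j^{-1}$ is a marginally sharper intermediate step than the paper's bound $\|\psi(y)\|_2^2/\sigma_{\rm min}^2$, and gives the same final estimate.
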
 \begin{proof}
First, observe that as $\psi \in \mathcal{P}^n([-1,1];\mathbb{R}^n)$ and $\Phi$ is independent on $y$, any solution $\alpha$ to \eqref{eq:alphacond} is a degree $n$ polynomial. We proceed to study the existence of a solution. By $(\cdot,\cdot)_D$ orthogonality of $\{P_j\}_{j=0}^n$ and direct computation it holds that
\begin{equation}
\label{eq:Sigma}
[\Phi W(\Phi W)^T]_{jl} = (P_j,P_l)_D = \begin{cases} \sum_{i=1}^N P_j(y_i)^2 w_i & \mbox{for $j=l$} \\ 0 & \mbox{otherwise}
\end{cases}. 
\end{equation}
That is, the matrix $\Phi W$ has a (thin) singular value decomposition $\Phi W = U \Sigma V^T$ for unitary $U\in \mathbb{R}^{ (n+1)\times (n+1)}$, $V\in \mathbb{R}^{N\times (n+1)}$, and diagonal  $\Sigma \in \mathbb{R}^{(n+1)\times(n+1)}$ s.t. $\Sigma_{jj} = \left( \sum_{i=1}^N P_j(y_i)^2w_i \right)^{1/2}$ (the singular values are not ordered). We proceed to estimate the smallest singular value $\sigma_{\rm min}$ from below. 

It holds by \eqref{eq:Sigma} and normalization of $\{ P_j\}$ that 
\begin{equation}
\sigma^2_{\rm min} = \min_{j\in \{0,\ldots,n\}} \sum_{i=1}^N P_j(y_i)^2w_i \geq \min_{q\in \mathcal{P}^n([-1,1])\setminus\{0\}} \frac{\sum_{i} q(y_i)^2 w_i}{\int_{-1}^1 q(y)^2\; dy}.
\end{equation}
Further, 
\begin{equation}
\label{eq:sigma_estimate}
\sigma^2_{\rm min} \geq \min_{q\in \mathcal{P}^n([-1,1])\setminus\{0\}} \frac{\int_{-1}^1 q(y)^2 \; dy + \sum_{i} q(y_i)^2 w_i - \int_{-1}^1 q(y)^2 \; dy }{\int_{-1}^1 q(y)^2\; dy} \geq 1 - e,
\end{equation} 
where $e$ is as defined in (\ref{eq:MZcond}).
Under the assumption $e<1$ it holds that $\sigma_{\rm min}>0$ and one solution to $\Phi \alpha(y) = \psi(y)$ is
\begin{equation*}
W^{-1}\alpha(y) = V \Sigma^{-1}U^T \psi(y).
\end{equation*}

Thus,  
\begin{equation}
\label{eq:Walpha_estimate1}
\| W^{-1} \alpha(y) \|^2_2 \leq \| \Sigma^{-1}\|^2_2 \| \psi(y)\|^2_2  =   \frac{\| \psi(y)\|^2_2 }{\sigma_{\rm min}^2}.
\end{equation}
By normalization of $\{P_j\}$ it holds that 
\begin{equation}
\label{eq:bynorm}
\int_{-1}^1 \| \psi(y) \|_2^2 \; dy = \int_{-1}^1 \sum_{j=0}^n P_j(y)^2 \; dy  = n+1.
\end{equation}
The proof is completed by combining \eqref{eq:Walpha_estimate1}, \eqref{eq:bynorm}, and \eqref{eq:sigma_estimate}. 
\end{proof}
Next Theorem uses Lemmas~\ref{lemma:cond} and \ref{lemma:existence} to show the existence of an approximation operator $\Pi_n$ to the snapshot subspace that is degree-$n$ polynomially exact  and establishes a stability estimate for it. 
\begin{theorem}
\label{thm:mainresult} Let $n\in\mathbb{N}$, $\{y_i\}_{i=1}^N \subset (-1,1)$, $\{w_i\}_{i=1}^N \subset \mathbb{R}^+$ satisfy Assumption~\ref{ass:ip} and $e \equiv e(\{y_i\}_{i=1}^N,\{w_i\}_{i=1}^N,n)$ be as defined in \eqref{eq:MZcond}. Assume that $e < 1$. In addition, let $\alpha \in \mathcal{P}^n([-1,1];\mathbb{R}^N)$ be as defined in Lemma~\ref{lemma:existence} and define the approximation operator $\Pi_n \in C([-1,1];\mathbb{R}^m_{A_0}) \rightarrow \mathcal{P}^n([-1,1];\mathbb{R}^m)$ as 
\begin{equation*}
(\Pi_n f)(y) =\sum_{i=1}^N \alpha_i(y) f(y_i).
\end{equation*}
Then $\Pi_n$ satisfies the following properties:
\begin{enumerate}
    \item $\Pi_n$ is degree-$n$ polynomially exact, i.e., $\Pi_n f = f$ for any $f\in \mathcal{P}^n([-1,1];\mathbb{R}^m)$.
    \item $\Pi_n$ satisfies the stability bound: $\|\Pi_n f \|_{L^2(-1,1;\mathbb{R}^m_{A_0})}
\leq C_{\rm s}(n) \|f\|_{L^\infty(-1,1;\mathbb{R}^m_{A_0})}$  for any $f\in C([-1,1];\mathbb{R}^m_{A_0})$ and 
\begin{equation}
\label{eq:stability_constant}
C_{\rm s}(n)= \left( \frac{2n+2}{1-e} \right)^{1/2}.
\end{equation}
\end{enumerate}
\end{theorem}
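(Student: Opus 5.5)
Property 1 follows directly by combining the two preceding lemmas. Since $e<1$, Lemma~\ref{lemma:existence} provides $\alpha\in\mathcal{P}^n([-1,1];\mathbb{R}^N)$ with $\Phi\alpha(y)=\psi(y)$ for every $y\in(-1,1)$. Lemma~\ref{lemma:cond} then shows that the operator built from these coefficients is degree-$n$ polynomially exact. Both sides of $\Pi_n f=f$ are polynomials, so the identity extends to the closed interval $[-1,1]$ by continuity.

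For the stability bound, fix $y$ and write $\alpha_i(y)f(y_i)=\bigl(\alpha_i(y)w_i^{-1/2}\bigr)\bigl(w_i^{1/2}f(y_i)\bigr)$. Apply the triangle inequality in $\|\cdot\|_{A_0}$ and then Cauchy--Schwarz in $\mathbb{R}^N$:
\begin{equation*}
\|(\Pi_n f)(y)\|_{A_0}\le\sum_{i=1}^N|\alpha_i(y)|\,\|f(y_i)\|_{A_0}
\le \|W^{-1}\alpha(y)\|_2\Bigl(\sum_{i=1}^N w_i\|f(y_i)\|_{A_0}^2\Bigr)^{1/2}.
\end{equation*}
By Assumption~\ref{ass:ip}, the weights are positive and satisfy $\sum_i w_i=2$. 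The last factor is therefore at most $\sqrt{2}\,\|f\|_{L^\infty(-1,1;\mathbb{R}^m_{A_0})}$; the points $y_i$ lie in $(-1,1)$, and $f$ is continuous, so the pointwise values are bounded by the sup norm.

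Next, square this bound and integrate over $(-1,1)$. Using \eqref{eq:alpha_stab},
\begin{equation*}
\|\Pi_n f\|^2_{L^2(-1,1;\mathbb{R}^m_{A_0})}\le 2\|f\|^2_{L^\infty}\int_{-1}^1\|W^{-1}\alpha(y)\|_2^2\,dy\le \frac{2(n+1)}{1-e}\|f\|^2_{L^\infty}.
\end{equation*}
Taking square roots gives exactly $C_{\rm s}(n)=\bigl((2n+2)/(1-e)\bigr)^{1/2}$.

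There is no real obstacle, since the heavy lifting is done in Lemma~\ref{lemma:existence}. The only step needing care is the weighted Cauchy--Schwarz split: the factors $w_i^{\pm1/2}$ must be placed so that the $W^{-1}$-norm of $\alpha$ appears, which is the quantity controlled by \eqref{eq:alpha_stab}. The constant-exactness part of Assumption~\ref{ass:ip} is then what turns the remaining factor into the clean constant $2$.
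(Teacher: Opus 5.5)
Your proof is correct and follows essentially the same route as the paper: polynomial exactness comes from combining Lemmas~\ref{lemma:cond} and \ref{lemma:existence}, and stability comes from splitting $\alpha_i(y) f(y_i)$ with the weights $w_i^{\pm 1/2}$, bounding the weighted snapshot sum by $\sqrt{2}\,\|f\|_{L^\infty(-1,1;\mathbb{R}^m_{A_0})}$ via $\sum_i w_i = 2$, and integrating against \eqref{eq:alpha_stab}. The only difference is cosmetic: the paper writes your triangle-inequality-plus-Cauchy--Schwarz step as the matrix estimate $\|A_0^{1/2}FW\|_2 \le \|A_0^{1/2}FW\|_F$ for the snapshot matrix $F=[f(y_1),\ldots,f(y_N)]$, and this gives the identical constant.
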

 
\begin{proof}
It holds that 
\begin{equation*}
\|\Pi_n f \|^2_{L^2(-1,1;\mathbb{R}^m_{A_0})}
= \int_{-1}^1 \left \| F \alpha(y)\right\|_{A_0}^2
= \int_{-1}^1 \left \| A_0^{1/2} F \alpha(y)\right\|_{2}^2,
\end{equation*}
for $F = \begin{bmatrix} f(y_1) & f(y_2) & \ldots &f(y_N) \end{bmatrix} \in \mathbb{R}^{m\times N}$. Then
\begin{equation}
\label{eq:stab_1}
\|\Pi_n f \|^2_{L^2(-1,1;\mathbb{R}^m_{A_0})}
\leq \| A_0^{1/2} F W \|^2_{2}
\int_{-1}^1 \| W^{-1} \alpha(y)\|_{2}^2.
\end{equation}
Recalling that $\| \cdot \|_2 \leq \| \cdot \|_F$ and using the definition of the $A_0$-norm gives
\begin{equation*}
\| A^{1/2}_0FW\|_2 \leq \|  A^{1/2}_0FW \|_F = \left(\sum_{i=1}^N \| f(y_i)\|_{A_0}^2 w_i \right)^{1/2}.
\end{equation*}
Estimating each $\| f(y_i)\|_{A_0}$ by the $L^\infty(-1,1;\mathbb{R}^m_{A_0})$-norm of $f$ and using the assumption $\sum_i w_i = 2$ gives
\begin{equation}
\label{eq:stab_2} 
\| A_0^{1/2}FW \|_{2} \leq \| f \|_{L^\infty(-1,1;A_0)} \left(\sum_{i=1}^Nw_i \right)^{1/2} = \sqrt{2} \| f \|_{L^\infty(-1,1;A_0)}. 
\end{equation}
Combining \eqref{eq:stab_1},\eqref{eq:stab_2}, and \eqref{eq:alpha_stab} completes the proof. 
\end{proof}

\begin{remark}
    The property (ii) is central, as it states that the stability constant (\ref{eq:stability_constant}) grows sub-linearly with rate $O(n^{1/2})$ with respect to the polynomial degree. 
\end{remark}
Using identical techniques as with the Lagrange interpolation operator gives the following near-best approximation result for $\Pi_n$. 
\begin{corollary} \label{cor:ip_error} Under the same assumptions as in Theorem~\ref{thm:mainresult} it holds that 
\begin{equation}
\label{eq:approximation_result}
\| f - \Pi_n f\|_{L^2(-1,1;\mathbb{R}^m_{A_0})}  \leq (\sqrt{2} + C_{\rm s}(n)) \min_{p_n \in \mathcal{P}^n([-1,1];\mathbb{R}^m)} \| f-p_n  \|_{L^\infty(-1,1;\mathbb{R}^m_{A_0})}.
\end{equation}
for any $f\in C([-1,1];\mathbb{R}^m_{A_0})$ and $C_{\rm s}(n)$ as in \eqref{eq:stability_constant}.
\end{corollary}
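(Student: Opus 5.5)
The plan is the standard Lebesgue-type argument used for Theorem~\ref{thm:nearbest}, with $\Pi_n$ in place of $\Pi_L$. We combine the polynomial exactness and the stability bound of Theorem~\ref{thm:mainresult}. The only extra ingredient is the comparison between the $L^2$ and $L^\infty$ norms on $(-1,1)$, which produces the constant $\sqrt{2}$.

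Fix $f\in C([-1,1];\mathbb{R}^m_{A_0})$ and an arbitrary $p_n\in\mathcal{P}^n([-1,1];\mathbb{R}^m)$. By property (i) of Theorem~\ref{thm:mainresult}, $\Pi_n p_n = p_n$. Linearity of $\Pi_n$ is immediate from its definition as $\sum_i \alpha_i(y) f(y_i)$. Together these give the splitting
\begin{equation*}
f-\Pi_n f = (f-p_n) - \Pi_n(f-p_n).
\end{equation*}
Applying the triangle inequality in $L^2(-1,1;\mathbb{R}^m_{A_0})$ yields
\begin{equation*}
\|f-\Pi_n f\|_{L^2(-1,1;\mathbb{R}^m_{A_0})} \leq \|f-p_n\|_{L^2(-1,1;\mathbb{R}^m_{A_0})} + \|\Pi_n(f-p_n)\|_{L^2(-1,1;\mathbb{R}^m_{A_0})}.
\end{equation*}

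We bound the two terms separately. For the first, the interval has length $2$, so
\begin{equation*}
\|g\|_{L^2(-1,1;\mathbb{R}^m_{A_0})}^2=\int_{-1}^1\|g(y)\|_{A_0}^2\,dy\le 2\|g\|^2_{L^\infty(-1,1;\mathbb{R}^m_{A_0})}.
\end{equation*}
This gives the factor $\sqrt{2}$. For the second, $f-p_n$ is continuous, so the stability bound (ii) of Theorem~\ref{thm:mainresult} applies and gives the factor $C_{\rm s}(n)$ times $\|f-p_n\|_{L^\infty(-1,1;\mathbb{R}^m_{A_0})}$.

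Summing the two bounds gives $(\sqrt{2}+C_{\rm s}(n))\|f-p_n\|_{L^\infty(-1,1;\mathbb{R}^m_{A_0})}$. Since $p_n$ was arbitrary, we take the infimum over $\mathcal{P}^n([-1,1];\mathbb{R}^m)$. The infimum is attained as a minimum because this is a finite-dimensional subspace of a normed space and the distance function is continuous and coercive on it.

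There is no real obstacle here. The only points needing care are that $\Pi_n$ is linear and acts on the continuous function $f-p_n$, and that the norm mismatch ($L^2$ on the left, $L^\infty$ on the right) is handled by the length-$2$ interval estimate.
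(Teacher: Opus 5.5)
Your proposal is correct and matches the paper's proof: you use polynomial exactness and linearity to write $f-\Pi_n f=(f-p_n)-\Pi_n(f-p_n)$, then the triangle inequality, the length-$2$ bound $\|g\|_{L^2(-1,1;\mathbb{R}^m_{A_0})}\le\sqrt{2}\,\|g\|_{L^\infty(-1,1;\mathbb{R}^m_{A_0})}$, and the stability estimate of Theorem~\ref{thm:mainresult}. Your sign in the splitting is the correct one; the paper writes a $+$ there, which is a harmless typo because only the triangle inequality is used afterwards.
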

\begin{proof}
By Theorem~\ref{thm:mainresult} $\Pi_n$ is degree-$n$ polynomially exact and stable.  These two properties are enough to relate the approximation error $\| (I-\Pi) f \|_{L^2(-1,1;\mathbb{R}^m_{A_0})}$ to the best degree $n$ polynomial approximation of $f$ in $L^\infty(-1,1;\mathbb{R}^m_{A_0})$-norm. By degree-$n$ polynomial exactness it holds that 
\begin{equation*}
\| f - \Pi_n f\|_{L^2(-1,1;\mathbb{R}^m_{A_0})} = 
\| f - p_n + \Pi_n (f-p_n)\|_{L^2(-1,1;\mathbb{R}^m_{A_0})}
\end{equation*}
for any $p_n \in\mathcal{P}^n([-1,1];\mathbb{R}^m)$.
Using the triangle inequality, the standard estimate $\| f-p_n \|_{L^2(-1,1;\mathbb{R}^m_{A_0})} \leq \sqrt{2} \| f-p_n \|_{L^\infty(-1,1;\mathbb{R}^m_{A_0})}$, and Theorem~\ref{thm:mainresult} (ii) yields 
\begin{equation*}
\| f - \Pi_n f\|_{L^2(-1,1;\mathbb{R}^m_{A_0})}  \leq (\sqrt{2} + C_{\rm s}(n)) \min_{p_n \in \mathcal{P}^n([-1,1];\mathbb{R}^m)} \| f-p_n  \|_{L^\infty(-1,1;\mathbb{R}^m_{A_0})}.
\end{equation*}
\end{proof}
The main difference between \eqref{eq:approximation_result} and the Lagrange interpolation error estimate \eqref{eq:lag_best} is that the multiplicative constant appearing in~\eqref{eq:approximation_result} grows at sub-linear rate $O(\sqrt{n})$ with respect to polynomial degree under very mild assumptions on the point set $\{y_i\}_{i=1}^N$. Namely, $\{y_i\}_{i=1}^N$ must satisfy the condition~\eqref{eq:MZcond}. In comparison, the Lebesgue constant appearing in the Lagrange error estimate grows exponentially, e.g., for equidistant point sets. The improved multiplicative constant compared to Lagrange interpolation is possible because we use more function evaluations to reach the same accuracy, the approximation is not exact at points $\{y_i \}$, and the error estimate in \eqref{eq:approximation_result} is in the $L^2$-norm with respect to the parameter.
\begin{remark} The reason for giving the stability estimate and the approximation result \eqref{eq:approximation_result} in the $L^2(-1,1;\mathbb{R}^m_{A_0})$ - norm arises from equation~\eqref{eq:bynorm} where it is natural to estimate $\| \psi(y)\|_2^2$ in the $L^2$ norm. Alternatively, one could use the Nikolskii-type inequality: there exists $C_{\rm Nik} >0$ such that $\| p_n \|_{L^\infty(-1,1)} \leq C_{\rm Nik} n \| p_n \|_{L^2 (-1,1)}$ for any $p_n \in \mathcal{P}^n([-1,1])$, see \cite[Thm. 2.6. p.102]{DeLo:93}. This yields the estimate
\begin{equation*}
    \|W^{-1}\alpha \|^2_{L^\infty(-1,1;\mathbb{R}^N)} \leq C^2_{\rm Nik} \frac{ n^2 (n+1)}{1-e}. 
\end{equation*}
An appropriate modification of the proof of Theorem~\ref{thm:mainresult} gives the stability estimate: if $e<1$, we have that 
\begin{equation*}
    \| \Pi_n f\|_{L^\infty(-1,1;\mathbb{R}^m_{A_0})}\leq C_{\rm s,\infty}(n) \| f\|_{L^\infty(-1,1;\mathbb{R}^m_{A_0})},
\end{equation*}
for
\begin{equation*}
    C_{s,\infty}(n) = \sqrt{2}C_{\rm Nik} n \left( \frac{n+1}{1-e}\right)^{1/2}.
\end{equation*}

Using this stability estimate and the same techniques as in Corollary~\ref{cor:ip_error} leads to the $L^\infty$ error estimate 
\begin{equation*}
\| f - \Pi_n f\|_{L^\infty(-1,1;\mathbb{R}^m_{A_0})}  \leq (1 + C_{\rm s,\infty}(n)) \min_{p_n \in\mathcal{P}^n([-1,1];\mathbb{R}^m)} \| f-p_n  \|_{L^\infty(-1,1;\mathbb{R}^m_{A_0})}.
\end{equation*}
Note that $C_{\rm s,\infty}$ grows as $O(n^{3/2})$. 
\end{remark}

\subsection{Gaussian and Arbitrary Snapshot point sets}
Next, we study the Marcinkiewicz–Zygmund condition~\eqref{eq:MZcond} that guarantees the existence of an approximation operator for two families of snapshot point sets. Our aim is to characterize when \eqref{eq:MZcond} is valid for a given  snapshot point set and polynomial degree $n$. 

First, consider choosing snapshots as degree $N$ Gaussian quadrature points and $\{w_i\}$ as the corresponding quadrature weights. The Gaussian quadrature is exact for degree $2N-1$ polynomials, and thus $e = 0$ if the polynomial degree $n$ satisfies $2n\leq 2N-1$. We obtain the following result. 
\begin{lemma}
\label{lemma:stab_gauss} Let $\{y_i\}_{i=1}^N \subset (-1,1)$ and $\{w_j\}_{j=1}^N \in \mathbb{R}^+$ be the Gaussian quadrature points and weights on $(-1,1)$, respectively. Then 
\begin{equation*}
\max_{q\in \mathcal{P}^{n}([-1,1])\setminus\{0\}} \left| \frac{\int_{-1}^1 q(y)^2 \; dy-\sum_{i} q(y_i)^2 w_i}{\int_{-1}^1 q(y)^2 \; dy} \right| = 0,
\end{equation*}
for any $n \leq N-1$. 
\end{lemma}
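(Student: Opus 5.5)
The claim follows directly from the polynomial exactness of Gauss--Legendre quadrature, so the plan is short. I will use the classical fact that the $N$-point Gaussian quadrature rule on $(-1,1)$ with nodes $\{y_i\}_{i=1}^N$ (the roots of the degree-$N$ Legendre polynomial) and positive weights $\{w_i\}_{i=1}^N$ satisfies
\begin{equation*}
\int_{-1}^1 p(y)\;dy = \sum_{i=1}^N p(y_i) w_i \quad \mbox{for any $p\in\mathcal{P}^{2N-1}([-1,1])$,}
\end{equation*}
see \cite{gautschi2004orthogonal} or \cite{davis}. Positivity of the weights and the inclusion $\{y_i\}\subset(-1,1)$ are part of the same classical result.

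First I fix $n\le N-1$ and an arbitrary $q\in\mathcal{P}^n([-1,1])\setminus\{0\}$. Then $q^2\in\mathcal{P}^{2n}([-1,1])$, and $2n\le 2N-2\le 2N-1$. Exactness therefore gives $\int_{-1}^1 q^2\,dy=\sum_i q(y_i)^2w_i$, so the numerator in the quotient vanishes.

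Next I check that the denominator is strictly positive. This holds because $q\not\equiv 0$ is continuous, so $\int_{-1}^1 q^2\,dy>0$. Consequently the quotient is zero for every admissible $q$, and the maximum over $q$ is $0$.

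As a side remark, Assumption~\ref{ass:ip} is also satisfied in this case. Taking $p\equiv 1$ in the exactness identity gives $\sum_i w_i=2$. Moreover, for $q\in\mathcal{P}^n([-1,1])$ we have $(q,q)_D=\int_{-1}^1 q^2\,dy>0$ whenever $q\neq0$, so $(\cdot,\cdot)_D$ is an inner product on $\mathcal{P}^n([-1,1])$. No step is a real obstacle; the only thing to watch is the degree count $2n\le 2N-1$.
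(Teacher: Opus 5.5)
Your proposal is correct and follows the same route as the paper, which simply notes that the $N$-point Gauss rule is exact for polynomials of degree $2N-1$. Hence $q^2\in\mathcal{P}^{2n}$ with $2n\le 2N-1$ is integrated exactly, and $e=0$; your extra checks (positivity of the denominator and that Assumption~\ref{ass:ip} holds) are correct additions not spelled out in the paper.
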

Second, consider using arbitrary snapshot points. In this case, the weights are chosen such that the related quadrature corresponds to a Riemann sum approximation. The weights do not affect the approximation operator, but are needed to study the condition~\eqref{eq:cond0}. For notational simplicity, assume that
\begin{equation*}
-1 < y_1 < y_2 < \ldots < y_N < 1.
\end{equation*}
Decompose the interval $(-1,1)$ into subintervals $\{I_i\}_{i=1}^{N}$ as follows 
\begin{equation}
  \label{eq:wrs}
  \begin{aligned}
I_i &= \left[ \frac{y_{i-1}+y_{i}}{2}, \frac{y_{i}+y_{i+1}}{2} \right] \quad \mbox{for $i\in \{2,\ldots,N-1\}$},
\\
I_1 &= \left(-1,\frac{y_1+y_2}{2} \right], \quad \mbox{and} \quad I_{N} = \left[ \frac{y_N+y_{N-1}}{2},1 \right).
\end{aligned}
\end{equation} 
We choose $w_i = |I_i|$ so that $\sum_{i=1}^N f(y_i) w_i$ corresponds to the Riemann sum approximation of the integral $\int_{-1}^1 f$ for any $f\in C([-1,1])$. A suitable relative error estimate for this quadrature follows by making a small modification to the standard error analysis of the Riemann sum approximation and using the inverse inequality for polynomials: there exists $C_{\rm inv}>0$ such that 
\begin{equation}
\label{eq:inverse_inequality}
    \| q' \|_{L^2(-1,1)} \leq C_{\rm inv}n^2 \| q \|_{L^2(-1,1)},
\end{equation}
for any $q\in \mathcal{P}^n([-1,1])$ and $n \in \mathbb{N}$, see \cite{borwein1995polynomials}.
\begin{lemma}
\label{lemma:stab_random}
Let $-1 < y_1 < y_2 < \ldots < y_N <1$ and $w_i = |I_i|$ for $i=1,\ldots,N$ as defined in \eqref{eq:wrs}. In addition, let $h=\max_{i\in \{1,\ldots,N\}} |I_i|$. Then 
\begin{equation*}
\max_{q\in \mathcal{P}^{n}([-1,1])\setminus\{0\}} \left| \frac{\int_{-1}^1 q(y)^2 \; dy-\sum_{i=1}^N q(y_i)^2 w_i}{\int_{-1}^1 q(y)^2 \; dy} \right| \leq 2C_{\rm inv}hn^2,
\end{equation*}
where $C_{\rm inv}$ is the constant from the inverse inequality \eqref{eq:inverse_inequality}.
\end{lemma}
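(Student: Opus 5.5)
Fix $q\in\mathcal{P}^n([-1,1])\setminus\{0\}$ and set $g=q^2$. The plan is to treat $\sum_i g(y_i)w_i$ as a Riemann sum for $\int_{-1}^1 g$. Because the intervals $\{I_i\}_{i=1}^N$ defined in \eqref{eq:wrs} partition $(-1,1)$ up to endpoints and $w_i=|I_i|$, we can write
\begin{equation*}
\int_{-1}^1 q^2\,dy-\sum_{i=1}^N q(y_i)^2w_i=\sum_{i=1}^N\int_{I_i}\bigl(g(y)-g(y_i)\bigr)\,dy .
\end{equation*}
Since $y_i\in I_i$, the fundamental theorem of calculus gives $|g(y)-g(y_i)|\le\int_{I_i}|g'(t)|\,dt$ for every $y\in I_i$. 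Integrating over $I_i$ then yields
\begin{equation*}
\Bigl|\int_{I_i}(g-g(y_i))\Bigr|\le |I_i|\int_{I_i}|g'|\le h\int_{I_i}|g'| .
\end{equation*}
Summing over $i$ and using that the $I_i$ are disjoint, the total error is bounded by $h\int_{-1}^1|g'|\,dy$.

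Next we control $\int_{-1}^1|g'|$. We have $g'=2qq'$, so the Cauchy--Schwarz inequality gives $\int_{-1}^1|g'|\le 2\|q\|_{L^2(-1,1)}\|q'\|_{L^2(-1,1)}$. The inverse inequality \eqref{eq:inverse_inequality} then bounds $\|q'\|_{L^2(-1,1)}\le C_{\rm inv}n^2\|q\|_{L^2(-1,1)}$. Combining these,
\begin{equation*}
\Bigl|\int_{-1}^1 q^2-\sum_{i=1}^N q(y_i)^2w_i\Bigr|\le 2C_{\rm inv}hn^2\int_{-1}^1 q^2\,dy .
\end{equation*}
Dividing by $\int_{-1}^1 q^2>0$ and taking the maximum over $q$ gives the claim.

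The only delicate points are bookkeeping. First, the end intervals $I_1$ and $I_N$ reach the boundary of $(-1,1)$, but they still contain $y_1$ and $y_N$ respectively, so the pointwise bound above applies to them unchanged. Second, we need $h$ rather than $2h$ in front of $\int_{I_i}|g'|$. This comes from bounding $|g(y)-g(y_i)|$ by the integral of $|g'|$ over the whole interval $I_i$, rather than by $|y-y_i|\,\|g'\|_\infty$. The factor $2$ in the statement then arises solely from the product rule. I do not expect any real obstacle here. The main ingredient is the inverse inequality, which is assumed in the paper.
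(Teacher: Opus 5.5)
Your proposal is correct and follows essentially the same argument as the paper: split the quadrature error over the cells $I_i$, bound each cell's contribution by $|I_i|\int_{I_i}|(q^2)'|$ via the fundamental theorem of calculus, and finish with the product rule, Cauchy--Schwarz, and the inverse inequality. The only cosmetic difference is that you sum over the cells first and apply Cauchy--Schwarz once on all of $(-1,1)$. The paper instead applies Cauchy--Schwarz on each $I_i$ and then a discrete Cauchy--Schwarz over $i$, which yields the same bound.
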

\begin{proof} As $w_i = |I_i|$, it holds that
\begin{equation}
    E=\left|\int_{-1}^1 q(y)^2 \; dy-\sum_{i=1}^N q(y_i)^2 w_i \right| = \left| \sum_{i=1}^{N}  \int_{I_i}  q(y)^2 -  q(y_i)^2 dy \right|.
\end{equation}
Set $F=q(y)^2$. By the fundamental theorem of calculus,
\begin{equation*}
    F(y) - F(y_i) = \int_{y_i}^y F'(s) ds,
\end{equation*}
Noting that $y \in I_i$ 
\begin{equation*} 
    \left| \int_{I_i} F(y) -  F(y_i)\, dy \right| =\left| \int_{I_i} \int_{y_i}^y F'(s) \, ds \, dy \right|  \leq |I_i| \int_{I_i} |F'(s)| \, ds. 
\end{equation*}
By the product rule and the Cauchy--Schwarz inequality,
\begin{equation*} 
 \int_{I_i} |F'(s)| \, ds 
\leq 2\lVert q' \rVert_{L^2(I_i)} \lVert q \rVert_{L^2(I_i)}. 
\end{equation*}
Combining yields the estimate
 \begin{equation*} 
    E \leq 2\sum_{i=1}^N |I_i| \lVert q' \rVert_{L^2(I_i)} \lVert q \rVert_{L^2(I_i)} \leq 2h \sum_{i=1}^N \lVert q' \rVert_{L^2(I_i)} \lVert q \rVert_{L^2(I_i)}.
\end{equation*}
Using the Cauchy--Schwarz inequality gives  
 \begin{equation*} 
E \leq 2h\left(\sum_{i=1}^N \lVert q' \rVert_{L^2((I_i))}^2 \right)^{1/2} \left(\sum_{i=1}^N \lVert q \rVert_{L^2(I_i)}^2 \right)^{1/2} = 2h\lVert q' \rVert_{L^2(-1,1)} \lVert q \rVert_{L^2(-1,1)} 
\end{equation*}
Using the inverse inequality in \eqref{eq:inverse_inequality} for polynomials completes the proof.
\end{proof}
\section{Error analysis}
\label{sec:error}

Next, we apply the approximation operator developed in Section~\ref{sec:approximation} to derive a priori error estimates for the Lagrange and POD ROMs of our model problem~\eqref{eq:plinsys}. Let $x(y)$ be the solution to \eqref{eq:plinsys} and $\tilde{x}(y)$ the subspace solution from the Lagrange subspace $V(\{y_i\}_{i=1}^N)$ as defined in  \eqref{eq:Lagrange}. Recall that the solution to the one-parametric model problem has the expansion
\begin{equation}
\label{eq:xfk}
    x(y) = \sum_{k=1}^m \gamma_k(y) \beta_k v_k \quad \mbox{for} \quad \gamma_k(y) = (1+\lambda_k y)^{-1}.
\end{equation} 
In this section, we estimate the error
\begin{equation}
\label{eq:L2error}
    \|x-\tilde{x}\|_{L^2(-1,1;\mathbb{R}^m_{A_0})} =\left( \int_{-1}^1 \| x(y)-\tilde{x}(y)\|^2_{A_0} \right)^{1/2}.
\end{equation}
The next theorem shows that if the sample points satisfy the condition $e < 1$ for $e\equiv e(\{y_i\}_{i=1}^N,\{w_i\}_{i=1}^N,n)$ as defined in \eqref{eq:MZcond}, the error \eqref{eq:L2error} is bounded by a multiplicative factor and the best degree $n$ polynomial approximation of $x$. 

\begin{theorem} \label{thm:mainresult2} Let $\{y_i\}_{i=1}^N \subset(-1,1)$ be the snapshot set and $\{w_i\}_{i=1}^N \subset \mathbb{R}^+$ positive weights satisfying Assumption~\ref{ass:ip}. In addition, let $x(y)$ be the solution to \eqref{eq:plinsys} and $\tilde{x}(y)$ the subspace solution from $V(\{y_i\}_{i=1}^N)$. Assume that $e < 1$ for $e\equiv e(\{y_i\}_{i=1}^N,\{w_i\}_{i=1}^N,n)$ as defined in \eqref{eq:MZcond}. Then there holds that 
\begin{equation*}
\|x-\tilde{x}\|_{L^2(-1,1;\mathbb{R}^m_{A_0})} \leq  \kappa \left( \sqrt{2} + C_{\rm s}(n) \right) \min_{q\in \mathcal{P}^n([-1,1];\mathbb{R}^m)} \|x -q\|_{L^\infty(-1,1;\mathbb{R}^m_{A_0})}.
\end{equation*}
where $\kappa$ and $C_{\rm s}(n)$ are as defined in \eqref{eq:kappadef} and \eqref{eq:stability_constant}, respectively.
\end{theorem}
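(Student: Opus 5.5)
The plan is to combine the pointwise near-best approximation property \eqref{eq:A0approx} of the Galerkin subspace solution with the approximation operator $\Pi_n$ of Theorem~\ref{thm:mainresult}. The link between the two is that, by construction, $\Pi_n$ maps into the snapshot subspace.

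Fix $y\in(-1,1)$. By Lemma~\ref{lemma:existence} (applicable since $e<1$ and Assumption~\ref{ass:ip} holds), there is a coefficient function $\alpha\in\mathcal{P}^n([-1,1];\mathbb{R}^N)$. The operator $\Pi_n$ of Theorem~\ref{thm:mainresult} is then well defined, and
\begin{equation*}
(\Pi_n x)(y)=\sum_{i=1}^N \alpha_i(y)\, x(y_i)\in V(\{y_i\}_{i=1}^N).
\end{equation*}
Hence $(\Pi_n x)(y)$ is an admissible choice of $\tilde v$ in \eqref{eq:A0approx}, which gives the pointwise bound
\begin{equation*}
\|x(y)-\tilde x(y)\|_{A_0}\leq \kappa\,\|x(y)-(\Pi_n x)(y)\|_{A_0}\quad\mbox{for every } y\in(-1,1).
\end{equation*}
This is the key point: $\alpha(y)$ depends only on $y$, $n$ and the snapshot points, not on the Galerkin solution. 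So it is legitimate to evaluate it at each $y$ separately.

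Next I square this bound and integrate over $(-1,1)$, which yields
\begin{equation*}
\|x-\tilde x\|_{L^2(-1,1;\mathbb{R}^m_{A_0})}\leq \kappa\,\|x-\Pi_n x\|_{L^2(-1,1;\mathbb{R}^m_{A_0})}.
\end{equation*}
Finally I apply Corollary~\ref{cor:ip_error} with $f=x$. This requires $x\in C([-1,1];\mathbb{R}^m_{A_0})$. By the solution formula \eqref{eq:solution_formula}, each $\gamma_k=(1+\lambda_k y)^{-1}$ is smooth on $[-1,1]$ because $\lambda_k\leq 1-r<1$. Therefore $x$ extends continuously (indeed analytically) to the closed interval, and in particular its values at the snapshot points are the actual snapshots $x(y_i)$. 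The corollary then bounds $\|x-\Pi_n x\|_{L^2}$ by $(\sqrt2+C_{\rm s}(n))\min_q\|x-q\|_{L^\infty}$, which is exactly the claimed estimate.

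I expect no serious obstacle. The only points needing care are checking that $\Pi_n x$ takes values in $V$ for every $y$, and the regularity of $x$ up to the endpoints needed to invoke Corollary~\ref{cor:ip_error}.
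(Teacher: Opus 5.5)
Your proposal is correct and follows the same route as the paper: the paper's proof likewise combines the pointwise near-best approximation property \eqref{eq:A0approx}, used with $\tilde v=(\Pi_n x)(y)\in V$, and Corollary~\ref{cor:ip_error}. Your extra check that $x$ extends continuously to $[-1,1]$, since $1+\lambda_k y\geq r>0$, is a detail the paper leaves implicit.
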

\begin{proof}
The result follows from the near-best approximation result and Corollary~\ref{cor:ip_error}.
\end{proof}

Hence, the error of the Lagrange subspace solution depends on the accuracy of the quadrature rule related to the snapshot points and the best polynomial approximation error of the parameter-to-solution map $x$. Observe, that the quadrature rule has to be \emph{sufficiently accurate} for the snapshot solution to have an error that is controlled by the best approximation utilising degree $n$ polynomials. The relation between $n$ and the snapshot set is characterised for two families of snapshot point sets in  Lemmas~\ref{lemma:stab_gauss} and \ref{lemma:stab_random}. 

The best polynomial approximation of the parameter-to-solution map $x$ is established using $A_0$-orthogonality as in the proof of Lemma~\ref{cor:rembound} and the best approximation result for coefficient functions $\gamma_k$ in \eqref{eq:gamma_best}. To be precise,
\begin{equation*}
\min_{q\in \mathcal{P}^n([-1,1];\mathbb{R}^m)} \|x -q\|_{L^\infty(-1,1;\mathbb{R}^m_{A_0})} \leq \frac{4}{r^2} (1-r)^{n+1} \|x(0)\|_{A_0}.
\end{equation*}
Combining this bound with Theorem~\ref{thm:mainresult} and Lemmas~\ref{lemma:stab_gauss} and \ref{lemma:stab_random} immediately yields error estimates for Lagrange subspace ROM using Gaussian quadrature points or arbitrary snapshot points 
\begin{corollary}
\label{cor:error_gauss} Let $\{y_i\}_{i=1}^N$ and $\{w_j\}_{j=1}^N$ be the Gaussian quadrature points and weights on $(-1,1)$, respectively. Then 
\begin{equation*}
\frac{ \| x-\tilde{x}\|_{L^2(-1,1;\mathbb{R}^m_{A_0})}}{\|x(0)\|_{A_0}} \leq \kappa\left(\sqrt{2} + \sqrt{2(n+1)} \right) \frac{4}{r^2} (1-r)^{n+1} 
\end{equation*}
for any $n \leq N-1$. 
\end{corollary}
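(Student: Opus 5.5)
The plan is to specialize Theorem~\ref{thm:mainresult2} to the Gaussian point set and then bound the best polynomial approximation term. We first check the hypotheses. The Gaussian weights are positive and sum to $2$, since the rule is exact for constants. The $N$ Gauss points are distinct. Hence for $n\le N-1$ the discrete form $(\cdot,\cdot)_D$ is positive definite on $\mathcal{P}^n([-1,1])$: a nonzero polynomial of degree at most $N-1$ cannot vanish at all $N$ points. So Assumption~\ref{ass:ip} holds. By Lemma~\ref{lemma:stab_gauss}, $e=0<1$ for every $n\le N-1$. Substituting $e=0$ into \eqref{eq:stability_constant} gives $C_{\rm s}(n)=\sqrt{2(n+1)}$. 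Theorem~\ref{thm:mainresult2} then yields
\begin{equation*}
\|x-\tilde{x}\|_{L^2(-1,1;\mathbb{R}^m_{A_0})} \leq \kappa\left(\sqrt{2}+\sqrt{2(n+1)}\right)\min_{q\in \mathcal{P}^n([-1,1];\mathbb{R}^m)} \|x-q\|_{L^\infty(-1,1;\mathbb{R}^m_{A_0})}.
\end{equation*}

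Next we bound the minimum using the expansion \eqref{eq:xfk}. For each $k$ let $p_k\in\mathcal{P}^n([-1,1])$ be a best uniform approximation of $\gamma_k$, and take the candidate $q(y)=\sum_k p_k(y)\beta_k v_k$. By $A_0$-orthonormality of $\{v_k\}$,
\begin{equation*}
\|x(y)-q(y)\|_{A_0}^2=\sum_{k=1}^m(\gamma_k(y)-p_k(y))^2\beta_k^2\le \max_k\|\gamma_k-p_k\|_{L^\infty(-1,1)}^2\,\|x(0)\|_{A_0}^2,
\end{equation*}
where we used $\sum_k\beta_k^2=\|x(0)\|_{A_0}^2$. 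This reduces the problem to a scalar best-approximation bound for each $\gamma_k$.

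The one step needing care is matching the exponent. The bound \eqref{eq:gamma_best} is stated for $\mathcal{P}^{N-1}$ with factor $(1-r)^N$, so we reindex it with $n$ in place of $N-1$. From the Rivlin-type estimate, the error is at most $\frac{4}{\lambda_k(\tau_k^2-1)^2}|\tau_k|^{n+2}\le \frac{4}{(\tau_k^2-1)^2}|\tau_k|^{n+1}$. Combining this with $|\tau_k|\le\lambda_k\le1-r$ and $(\tau_k^2-1)^2>r^2$ gives $\frac{4}{r^2}(1-r)^{n+1}$, which is the bound quoted just before the corollary.

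The case $\lambda_k=0$ needs separate treatment, since the Rivlin formula divides by $\lambda_k$. In that case $\gamma_k\equiv1$ is itself a polynomial and its approximation error is zero. Finally, dividing the combined estimate by $\|x(0)\|_{A_0}$ gives the claim for any $n\le N-1$.
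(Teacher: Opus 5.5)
Your proposal is correct and follows the paper's own route. You apply Theorem~\ref{thm:mainresult2} with $e=0$ from Lemma~\ref{lemma:stab_gauss}, so that $C_{\rm s}(n)=\sqrt{2(n+1)}$, and reduce via $A_0$-orthogonality to the scalar bound \eqref{eq:gamma_best} reindexed to degree $n$, which gives the factor $\frac{4}{r^2}(1-r)^{n+1}\|x(0)\|_{A_0}$. Your explicit checks of Assumption~\ref{ass:ip} and of the case $\lambda_k=0$ are details the paper leaves implicit.
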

\begin{corollary}
\label{cor:error_random}
Let $\{y_i\}_{i=1}^N \subset (-1,1)$, $\{w_i\}_{i=1}^N \subset \mathbb{R}_+$ satisfy Assumption~\ref{ass:ip}. Let $h \equiv h(\{y_i\}_{i=1}^N)$ be as defined in Lemma~\ref{lemma:stab_random}. Assume that the point set and polynomial degree satisfy $2C_{\rm inv}hn^2 <1$. Then   
\begin{equation*}
\frac{\| x-\tilde{x} \|_{L^2(-1,1;\mathbb{R}^m_{A_0})}}{\|x(0)\|_{A_0}}  \leq  \kappa \left( \sqrt{2} +  \frac{\sqrt{2(n+1)}}{1-2C_{\rm inv}hn^2} \right) \frac{4}{r^2}  (1-r)^{n+1}. 
\end{equation*}
\end{corollary}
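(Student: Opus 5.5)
The plan is to specialise Theorem~\ref{thm:mainresult2} to the Riemann-sum weights of Lemma~\ref{lemma:stab_random}. We then make the stability constant explicit through the bound on $e$, and finally insert the best polynomial approximation bound for $x$ stated just before Corollary~\ref{cor:error_gauss}. The weights do not enter the Lagrange subspace $V(\{y_i\}_{i=1}^N)$ at all; they are only an analytical tool. We are therefore free to work with $w_i=|I_i|$ as in \eqref{eq:wrs}, after ordering the points increasingly.

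\textbf{Step 1: Assumption~\ref{ass:ip} holds for these weights.}
\begin{itemize}
\item The weights are positive, and $\sum_i w_i=\sum_i|I_i|=2$ because the $I_i$ partition $(-1,1)$.
\item By Lemma~\ref{lemma:stab_random}, $e\le 2C_{\rm inv}hn^2<1$.
\item This bound also gives definiteness of $(\cdot,\cdot)_D$ on $\mathcal{P}^n([-1,1])$. If some nonzero $q\in\mathcal{P}^n([-1,1])$ had $\sum_i q(y_i)^2w_i=0$, the quotient in \eqref{eq:MZcond} would equal $1$, contradicting $e<1$. Hence $(\cdot,\cdot)_D$ is an inner product.
\end{itemize}

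\textbf{Step 2: apply Theorem~\ref{thm:mainresult2}.} This gives
\[
\|x-\tilde{x}\|_{L^2(-1,1;\mathbb{R}^m_{A_0})}\le\kappa\bigl(\sqrt{2}+C_{\rm s}(n)\bigr)\min_{q\in\mathcal{P}^n([-1,1];\mathbb{R}^m)}\|x-q\|_{L^\infty(-1,1;\mathbb{R}^m_{A_0})}.
\]
We bound $C_{\rm s}(n)=\sqrt{2(n+1)}\,(1-e)^{-1/2}$ from \eqref{eq:stability_constant} in two moves:
\begin{itemize}
\item Since $0<1-e\le 1$, we have $(1-e)^{-1/2}\le(1-e)^{-1}$.
\item Since $e\le 2C_{\rm inv}hn^2<1$ and $t\mapsto(1-t)^{-1}$ is increasing on $[0,1)$, we have $(1-e)^{-1}\le(1-2C_{\rm inv}hn^2)^{-1}$.
\end{itemize}
Together these yield the bracket appearing in the statement.

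\textbf{Step 3: bound the best approximation of $x$.} We justify the inequality stated before Corollary~\ref{cor:error_gauss}.
\begin{itemize}
\item For each $k$, let $p_k\in\mathcal{P}^n([-1,1])$ be a best uniform approximation of $\gamma_k$.
\item \eqref{eq:gamma_best} with $N-1$ replaced by $n$ gives $\|\gamma_k-p_k\|_{L^\infty}\le\frac{4}{r^2}(1-r)^{n+1}$.
\item Set $q(y)=\sum_k p_k(y)\beta_kv_k$. By $A_0$-orthonormality of $\{v_k\}$, for every $y$,
\[
\|x(y)-q(y)\|_{A_0}^2=\sum_k(\gamma_k(y)-p_k(y))^2\beta_k^2\le\Bigl(\frac{4}{r^2}(1-r)^{n+1}\Bigr)^2\sum_k\beta_k^2 .
\]
\item Finally, $\sum_k\beta_k^2=\|x(0)\|_{A_0}^2$.
\end{itemize}
Dividing by $\|x(0)\|_{A_0}$ then completes the proof.

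The only mildly delicate point is Step 1. We must confirm that the specific Riemann weights satisfy Assumption~\ref{ass:ip}, and in particular that definiteness follows from $e<1$. Everything else is direct substitution into results already established in the paper.
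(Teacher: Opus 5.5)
Your proposal is correct and follows the same route as the paper. The paper obtains the corollary by combining Theorem~\ref{thm:mainresult2}, the bound $e\le 2C_{\rm inv}hn^2$ from Lemma~\ref{lemma:stab_random}, and the best-approximation estimate for $x$ built from \eqref{eq:gamma_best}. Your explicit switch to the Riemann weights $w_i=|I_i|$ (legitimate because the Lagrange subspace does not depend on the weights), your check of Assumption~\ref{ass:ip} via $e<1$, and your step $(1-e)^{-1/2}\le(1-e)^{-1}$ simply spell out what the paper's one-line ``combining'' argument leaves implicit.
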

Corollary~\ref{cor:error_random} is our main a priori error estimate for the Lagrange subspace method. It applies to arbitrary point sets. The polynomial degree $n$ is determined by $h(\{y_i\}_{i=1}^N)$ that, intuitively speaking measures point density. For example, for equidistant point sets, $h$ behaves as $O(\frac{1}{N})$, and thus $n = O(\sqrt{N})$. This leads to sub-exponential convergence $O(N^{1/4} (1-r)^{\sqrt{N}})$ with respect to $N$. 

\subsection{Error Estimate for the POD Method}

Next, we apply the approximation operator given in Section~\ref{sec:approximation}  to study the error of the POD based ROM. We consider the POD subspace $V_{\rm POD}$ as in Definition \ref{def:VPOD}. Here it is important to use the same weights $\{w_i\}_{i=1}^N$ to define the POD subspace and to verify the condition $e < 1$, where $e$ is defined in (\ref{eq:MZcond}). Using the same technique as in the proof of Theorem~\ref{thm:poderror} but with the approximation operator $\Pi_n$, we obtain the following Theorem. 
\begin{theorem} \label{thm:poderror} Let $\{y_i\}_{i=1}^N \subset(-1,1)$ be the snapshot set and $\{w_i\}_{i=1}^N \subset \mathbb{R}^+$ be positive weights satisfying Assumption~\ref{ass:ip}. In addition, let $x(y)$ be the solution to \eqref{eq:plinsys} and $\tilde{x}_{\rm POD}(y)$ the subspace solution from $V_{\rm POD}$ defined using the weights $\{w_i\}_{i=1}^N$. Assume that $e<1$, where $e$ is defined in (\ref{eq:MZcond}).
Then there holds that 
\begin{equation}
\label{eq:PODW}
\kappa^{-1} \| x-\tilde{x}_{\rm POD}\|_{L^2(-1,1;\mathbb{R}^m_{A_0})} \leq \sqrt{2} \|(I-\Pi_n)x \|_{L^2(-1,1;\mathbb{R}^m_{A_0})}+ \sigma_{r+1} C_{\rm s}(n). 
\end{equation}
Here $\sigma_{r+1}$ and $C_{\rm s}(n)$ are as defined in \eqref{eq:def_sr} and \eqref{eq:stability_constant}.
\end{theorem}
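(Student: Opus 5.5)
The plan is to mimic the proof of the $W=I$ POD lemma from Section~3, replacing the Lagrange coefficients $\ell_i(y)$ by the coefficient functions $\alpha_i(y)$ of $\Pi_n$ from Lemma~\ref{lemma:existence}. We will then integrate over $y$ instead of taking a supremum.

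Fix $y\in(-1,1)$. By the near-best approximation property \eqref{eq:A0approx}, $\|x(y)-\tilde{x}_{\rm POD}(y)\|_{A_0}\le\kappa\|x(y)-\tilde v\|_{A_0}$ for any $\tilde v\in V_{\rm POD}=R(\tilde X_r)$. We choose
\[
\tilde v=\tilde X_r W^{-1}\alpha(y)\in V_{\rm POD}.
\]
This is legitimate since $W$ is invertible with positive diagonal. Observe that $XW\,W^{-1}\alpha(y)=X\alpha(y)=(\Pi_n x)(y)$. Adding and subtracting $\tilde X W^{-1}\alpha(y)$ and using the triangle inequality together with the definition \eqref{eq:2A0norm} gives
\[
\|x(y)-\tilde{x}_{\rm POD}(y)\|_{A_0}\le \kappa\|x(y)-(\Pi_nx)(y)\|_{A_0}+\kappa\|\tilde X-\tilde X_r\|_{2,A_0}\|W^{-1}\alpha(y)\|_2 .
\]
By Definition~\ref{def:VPOD} the middle factor equals $\sigma_{r+1}$.

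Next we square, using $(a+b)^2\le 2a^2+2b^2$, and integrate over $(-1,1)$. This yields
\[
\kappa^{-2}\|x-\tilde x_{\rm POD}\|^2_{L^2}\le 2\|(I-\Pi_n)x\|^2_{L^2}+2\sigma_{r+1}^2\int_{-1}^1\|W^{-1}\alpha\|_2^2\,dy .
\]
By \eqref{eq:alpha_stab} the integral is at most $(n+1)/(1-e)$, so $2\sigma_{r+1}^2(n+1)/(1-e)=\sigma_{r+1}^2C_{\rm s}(n)^2$ by \eqref{eq:stability_constant}. Taking square roots and using $\sqrt{a^2+b^2}\le a+b$ gives
\[
\kappa^{-1}\|x-\tilde x_{\rm POD}\|_{L^2(-1,1;\mathbb{R}^m_{A_0})}\le\sqrt2\,\|(I-\Pi_n)x\|_{L^2(-1,1;\mathbb{R}^m_{A_0})}+\sigma_{r+1}C_{\rm s}(n),
\]
which is exactly \eqref{eq:PODW}. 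The factor $\sqrt2$ comes precisely from the elementary inequality $(a+b)^2\le2a^2+2b^2$.

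The only delicate point is the choice of coefficient vector $W^{-1}\alpha(y)$. The POD truncation error is controlled in the weighted matrix $\tilde X=XW$, so the weights must be absorbed into the coefficients. This is why the same weights must define both the POD subspace and the MZ quantity $e$: only then does the stability bound \eqref{eq:alpha_stab}, which is stated for $\|W^{-1}\alpha\|_2$, fit exactly. Everything else is routine bookkeeping.
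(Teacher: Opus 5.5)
Your proposal is correct and is essentially the paper's proof. It uses the same choice $\tilde X_r W^{-1}\alpha(y)\in V_{\rm POD}$, the same splitting through $\tilde X W^{-1}\alpha(y)=X\alpha(y)=(\Pi_n x)(y)$, the inequality $(a+b)^2\le 2a^2+2b^2$, integration over $(-1,1)$, the bound \eqref{eq:alpha_stab} rewritten as $\int_{-1}^1\|W^{-1}\alpha\|_2^2\,dy\le C_{\rm s}(n)^2/2$, and a final square root.
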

\begin{proof}

By the near-best approximation property in \eqref{eq:A0approx}, the subspace solution $\tilde{x}_{\rm POD}(y)$ from $V_{\rm POD}$ satisfies 
\begin{equation*}
\kappa^{-2} \| x(y) - \tilde{x}_{\rm POD}(y) \|^2_{A_0} \leq \| x(y)-v_{\rm POD}\|^2_{A_0} \quad \mbox{for any $v_{\rm POD} \in V_{\rm POD}$}.
\end{equation*}
Choosing $v_{\rm POD} = \tilde{X}_r W^{-1}\alpha(y)$ for $\alpha(y)$ satisfying \eqref{eq:alpha_stab},  adding and subtracting $X\alpha(y)$, and applying triangle inequality gives
\begin{equation}
\label{eq:POD_intermediate}
\frac{\kappa^{-2}}{2}\| x(y) - \tilde{x}_{\rm POD}(y) \|^2_{A_0} \leq 
\| x(y) - X\alpha(y) \|^2_{A_0} + 
\| \tilde{X} - \tilde{X}_r \|^2_{2,A_0}   \|W^{-1}\alpha(y)\|^2_2. 
\end{equation}
Here we have used the identity $\tilde{X} = XW$. Observe, that $X\alpha(y) = \Pi_n x(y)$. Hence, 
\begin{equation*}
\| x(y) - X\alpha(y) \|_{A_0} = \| [(I- \Pi_n)x](y) \|_{A_0}.
\end{equation*}
The first term on the RHS of \eqref{eq:PODW} arises by integrating this term over $(-1,1)$.

We proceed to estimate the last term in \eqref{eq:POD_intermediate}. By Definition~\ref{def:VPOD}, it holds that $\| \tilde{X} - \tilde{X}_r\|_{2,A_0} = \sigma_{r+1}$ so that
\begin{equation*}
\int_{-1}^1 \| \tilde{X} - \tilde{X}_r \|^2_{2,A_0}   \|W^{-1}\alpha(y)\|^2_2 \; dy = \sigma_{r+1}^2 \int_{-1}^1 \| W^{-1} \alpha(y) \|_{2}^2 \; dy.  
\end{equation*}
Using the estimate $\int_{-1}^1 \| W^{-1} \alpha(y) \|_{2}^2 \; dy \leq \frac{C_{\rm s}(n)^2}{2}$ from Lemma \ref{lemma:existence}, integrating \eqref{eq:POD_intermediate} over $(-1,1)$, and multiplying with $2$ gives
\begin{equation*}
\kappa^{-2} \| x-\tilde{x}_{\rm POD}\|^2_{L^2(-1,1;\mathbb{R}^m_{A_0})} \leq 2 \int_{-1}^1\| [(I- \Pi_n)x](y) \|^2_{A_0} \; dy +  \sigma^2_{r+1} C^2_{\rm s}(n). 
\end{equation*}
Taking the square root completes the proof. 
\end{proof}

\section{Conclusions}

This article gives new a priori error estimates for Lagrange and POD ROMs that are based on arbitrary snapshot point sets. The key tool in deriving these estimates is a new approximation operator to the snapshot subspace given in Theorem~\ref{thm:mainresult}. This operator is defined to be exact for vector-valued degree-$n$ polynomial functions. Existence and stability of the operator are proven under the assumption that there exists a  quadrature rule with snapshot points and some positive weights that can integrate second powers of degree-$n$ polynomials with relative accuracy smaller than one, see~\eqref{eq:MZcond}. This condition turns out to be relatively mild as Lemma~\ref{lemma:stab_random} shows that it is satisfied for arbitrary snapshot points, if the point spacing satisfies $h=\mathcal{O}(n^{-2})$. In contrast to Lagrange interpolation whose error estimates feature the Lebesgue constant, which grows exponentially for equidistant point sets, the stability constant of our approximation operator grows at a sublinear rate $O(n^{1/2})$ with respect to the polynomial degree. Our POD and Lagrange ROM a priori error estimates in Theorem~\ref{thm:mainresult2} and Corollary~\ref{cor:error_random} are valid for any values of the stability and continuity constants $r$ and $C$ as defined in \eqref{eq:def_r} and \eqref{eq:a1_cont}, respectively. Parametric PDEs become much more complicated when the parameter dimension is high. These challenges are not studied in this work and they present an interesting direction for future work. 

\section{Acknowledgements} Vigdis Toresen's work was part of the Ministry of Education and Culture's Doctoral Education Pilot under Decision No. VN/3137/2024-OKM-6 (Doctoral Education Pilot for Mathematics of Sensing, Imaging and Modelling). This work was also supported by the Finnish Research Council (decisions 353080, 353081, 358944, 359181)

\bibliographystyle{plain}
\bibliography{references}

@book{DeLo:93,
author = {DeVore, Ronald A. and Lorentz, G. G.},
address = {Berlin},
title = {Constructive approximation},
language = {eng},
lccn = {93018420},
publisher = {Springer-Verlag},
year = {1993},
}

@book{trefethen2019,
  title={Approximation Theory and Approximation Practice, Extended Edition},
  author={Trefethen, Lloyd N.},
  year={2019},
  publisher={SIAM},
  address={Philadelphia, PA}
}

@article{Cohen2017,
  author  = {Cohen, Albert and Migliorati, Giovanni},
  title   = {Optimal weighted least-squares methods},
  journal = {The SMAI journal of computational mathematics},
  year    = {2017},
  volume  = {3},
  pages   = {181--203},
  doi     = {10.5802/smai-jcm.24}
}

@article{Cohene2013,
  author  = {Cohen, Albert and Davenport, Mark A. and Leviatan, Dany},
  title   = {On the Stability and Accuracy of Least Squares Approximations},
  journal = {Foundations of Computational Mathematics},
  year    = {2013},
  volume  = {13},
  pages   = {819--834},
  doi     = {10.1007/s10208-013-9142-3}
}

@article{An2022,
  author  = {An, Congpei and Wu, Hao-Ning},
  title   = {On the quadrature exactness in hyperinterpolation},
  journal = {BIT Numerical Mathematics},
  year    = {2022},
  volume  = {62},
  pages   = {1899--1919},
  doi     = {10.1007/s10543-022-00935-x}
}

@book{davis,
  title={Interpolation and Approximation},
  author={Davis, P.J.},
  isbn={9780486624952},
  lccn={75002568},
  year={1963},
  publisher={Dover Publications}
}

@article{An2025,
  author  = {An, Congpei and Ran, Jiashu and Wu, Hao-Ning},
  title   = {The path of hyperinterpolation: A survey},
  journal = {Dolomites Research Notes on Approximation},
  year    = {2025},
  volume  = {18},
  pages   = {135--145},
}

@book{gautschi2004orthogonal,
  title={Orthogonal Polynomials: Computation and Approximation},
  author={Gautschi, Walter},
  series={Numerical Mathematics and Scientific Computation},
  year={2004},
  publisher={Oxford University Press},
  isbn={9780198506720}
}

@article{brutman1997lebesgue,
  author  = {Brutman, Lev},
  title   = {Lebesgue functions for polynomial interpolation---a survey},
  journal = {Annals of Numerical Mathematics},
  volume  = {4},
  number  = {1--4},
  pages   = {111--127},
  year    = {1997}
}

@article{FiRh:83,
  author  = {Fink, J. P. and Rheinboldt, W. C.},
  title   = {On the error behavior of the reduced basis technique for nonlinear finite element approximations},
  journal = {ZAMM - Journal of Applied Mathematics and Mechanics / Zeitschrift f{\"u}r Angewandte Mathematik und Mechanik},
  year    = {1983},
  volume  = {63},
  pages   = {21--28}
}

@article{Po:85,
  title={Estimation of the Error in the Reduced Basis Method Solution of Nonlinear Equations},
  author={Porsching, T. A.},
  journal={Mathematics of Computation},
  volume={45},
  number={172},
  pages={487--496},
  year={1985},
  publisher={American Mathematical Society},
  url={https://www.jstor.org/stable/2008138}
}

@article{ScGi:11,
  title={Sparse tensor discretizations of high-dimensional parametric and stochastic PDEs},
  author={Schwab, Christoph and Gittelson, Claude Jeffrey},
  journal={Acta Numerica},
  volume={20},
  pages={291--467},
  year={2011},
  publisher={Cambridge University Press}
}

@phdthesis{Ve:03,
  title={Reduced Basis Methods Applied to Problems in Elasticity: Analysis and Applications},
  author={Veroy, Karen},
  year={2003},
  school={Massachusetts Institute of Technology},
  address={Cambridge, MA}
}

@book{BrSc:08,
  title={The Mathematical Theory of Finite Element Methods},
  author={Brenner, Susanne C and Scott, L Ridgway},
  edition={3rd},
  year={2008},
  publisher={Springer},
  series={Texts in Applied Mathematics},
  volume={15},
  doi={10.1007/978-0-387-75934-0}
}

@article{LaRo:10,
title = {Parametric free-form shape design with PDE models and reduced basis method},
journal = {Computer Methods in Applied Mechanics and Engineering},
volume = {199},
number = {23},
pages = {1583-1592},
year = {2010},
issn = {0045-7825},
doi = {https://doi.org/10.1016/j.cma.2010.01.007},
url = {https://www.sciencedirect.com/science/article/pii/S0045782510000162},
author = {Toni Lassila and Gianluigi Rozza},
}

@article{MaQuRo:12,
  title={Shape optimization for viscous flows by reduced basis methods and free-form deformation},
  author={Manzoni, Andrea and Quarteroni, Alfio and Rozza, Gianluigi},
  journal={International Journal for Numerical Methods in Fluids},
  volume={70},
  number={5},
  pages={646--670},
  year={2012},
  publisher={John Wiley \& Sons, Ltd},
  doi={10.1002/fld.2712}
}

@article{LiWiGh:10,
  title={Parameter and state model reduction for large-scale statistical inverse problems},
  author={Lieberman, Chad and Willcox, Karen and Ghattas, Omar},
  journal={SIAM Journal on Scientific Computing},
  volume={32},
  number={5},
  pages={2523--2542},
  year={2010},
  publisher={SIAM},
  doi={10.1137/090775622}
}

@incollection{FaMaYoWiBl:11,
  title={Surrogate and reduced-order modeling: a comparison of approaches for large-scale statistical inverse problems},
  author={Frangos, Michalis and Marzouk, Youssef and Willcox, Karen and van Bloemen Waanders, Bart},
  booktitle={Large-Scale Inverse Problems and Quantification of Uncertainty},
  editor={Biegler, Lorenz and Biros, George and Ghattas, Omar and Heinkenschloss, Matthias and Keyes, David and Mallick, Bani and Marzouk, Youssef and Tenorio, Luis and van Bloemen Waanders, Bart and Willcox, Karen},
  pages={123--149},
  year={2011},
  publisher={John Wiley \& Sons},
  address={Hoboken, NJ, USA},
  doi={10.1002/9780470685853.ch7}
}

@article{BaNoTe:07,
  title={A stochastic collocation method for elliptic partial differential equations with random input data},
  author={Babu{\v{s}}ka, Ivo and Nobile, Fabio and Tempone, Ra{\'u}l},
  journal={SIAM Journal on Numerical Analysis},
  volume={45},
  number={3},
  pages={1005--1034},
  year={2007},
  publisher={SIAM},
  doi={10.1137/050645142}
}

@article{MaPaTu:02,
     author = {Yvon Maday and Anthony T. Patera and G. Turinici},
     title = {Global a priori convergence theory for reduced-basis approximations of single-parameter symmetric coercive elliptic partial differential equations},
     journal = {Comptes Rendus. Math\'ematique},
     pages = {289--294},
     year = {2002},
     publisher = {Elsevier},
     volume = {335},
     number = {3},
     doi = {10.1016/S1631-073X(02)02466-4},
     language = {en}
}

@article{KaVo:07,
  title={Galerkin proper orthogonal decomposition methods for parameter dependent elliptic systems},
  author={Kahlbacher, Martin and Volkwein, Stefan},
  journal={Discussiones Mathematicae, Differential Inclusions, Control and Optimization},
  volume={27},
  number={1},
  pages={95--117},
  year={2007},
  url={http://eudml.org/doc/271156}
}

@article{CoDe:15,
  title={Approximation of high-dimensional parametric PDEs},
  author={Cohen, Albert and DeVore, Ronald},
  journal={Acta Numerica},
  volume={24},
  pages={1--159},
  year={2015},
  publisher={Cambridge University Press},
  doi={10.1017/S0962492915000033}
}

@article{BestUniformApproxZY,
title = {Best uniform approximation to a class of rational functions},
journal = {Journal of Mathematical Analysis and Applications},
volume = {334},
number = {2},
pages = {909-921},
year = {2007},
issn = {0022-247X},
doi = {10.1016/j.jmaa.2006.10.047},
author = {Zhitong Zheng and Jun-Hai Yong},
}

@article{BestUniformApproxJM,
title = {The best approximation of some rational functions in uniform norm},
journal = {Applied Numerical Mathematics},
volume = {55},
number = {2},
pages = {204-214},
year = {2005},
issn = {0168-9274},
doi = {https://doi.org/10.1016/j.apnum.2005.02.005},
author = {Sadegh Jokar and Bahman Mehri},
}

@book{Rivlin,
    author = {Rivlin, Theodore J.},
    title = {An Introduction to the Approximation of Functions},
    publisher = {Dover Publications},
    year = {2003}
}

@article{BiCoDa:11,
  title={Convergence Rates for Greedy Algorithms in Reduced Basis Methods},
  author={Binev, Peter and Cohen, Albert and Dahmen, Wolfgang and DeVore, Ronald and Petrova, Guergana and Wojtaszczyk, Przemyslaw},
  journal={SIAM Journal on Mathematical Analysis},
  volume={43},
  number={3},
  pages={1457--1472},
  year={2011},
  publisher={SIAM}
}

@article{BuMaPa:12,
      title={A priori convergence of the greedy algorithm for the parametrized reduced basis method},
      author={Buffa, Andrea and Maday, Yvon and Patera, Anthony T and Prud'homme, Christophe and Turinici, Gabriel},
      journal={ESAIM: Mathematical Modelling and Numerical Analysis},
      volume={46},
      number={3},
      pages={595--603},
      year={2012},
      publisher={EDP Sciences}
    }

@book{borwein1995polynomials,
  title     = {Polynomials and Polynomial Inequalities},
  author    = {Borwein, Peter and Erdélyi, Tamás},
  series    = {Graduate Texts in Mathematics},
  volume    = {161},
  year      = {1995},
  publisher = {Springer},
  address   = {New York, NY},
  isbn      = {978-1-4612-0793-1},
  doi       = {10.1007/978-1-4612-0793-1}
}

\appendix

\section{Best uniform approximation by polynomials}
\label{sec:best_poly}

We here review a classic result from approximation theory based on a Chebyshev series expansion. The Chebyshev polynomial of degree $n$ on $[-1,1]$ is defined by $T_n(t) = \cos (n\theta)$, where $t=\cos (\theta)$  and $\theta \in [0,\pi ]$.

%%%%%%%%%%%%%%%%%%%%%%%%%%%%%%%%%%%
\begin{lemma}\label{thm:BestUniform} \cite{Rivlin}\cite{BestUniformApproxJM} %\cite{BestUniformApproxZY}
    The best approximation by a polynomial of degree $\leq n$ in the uniform norm to $f(t)=1/(t - c)$ on $[-1,1]$, where $c>1$ is
    \begin{equation}
        p^\ast_n(t) = \frac{-2\tau}{(\tau^2-1) } +  \frac{4\tau}{(\tau^2-1) }\sum_{j=0}^{n-1}\tau^j T_j (t) - \frac{4\tau^{n+1}}{(\tau^2-1)^2}  T_n(t),
    \end{equation}
and
\begin{equation}\label{minimax_error}
   \left\|f - p^\ast_n  \right\|_{L^\infty(-1,1)} = \frac{4|\tau|^{n+2}}{(\tau^2 -1)^2}
\end{equation}
where $\tau = c-\sqrt{c^2-1}$. %$|\tau | < 1$.
\end{lemma}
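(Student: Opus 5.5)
The plan is to obtain an explicit Chebyshev expansion of $f(t)=1/(t-c)$ and then to show that $f-p^\ast_n$ equioscillates at $n+2$ points. The Chebyshev equioscillation (alternation) theorem, see \cite{Rivlin}, then identifies $p^\ast_n$ as the unique best uniform approximation, and the amplitude of the oscillation is the minimax error \eqref{minimax_error}.

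\textbf{Step 1: Chebyshev expansion.} Put $t=\cos\theta$ and $z=e^{i\theta}$. Since $\tau=c-\sqrt{c^2-1}\in(0,1)$, we have $c=\tfrac12(\tau+\tau^{-1})$, and therefore
\begin{equation*}
t-c=-\frac{1-2\tau t+\tau^2}{2\tau}=-\frac{(1-\tau z)(1-\tau \bar z)}{2\tau}.
\end{equation*}
The Poisson-kernel identity
\begin{equation*}
\frac{1-\tau^2}{1-2\tau t+\tau^2}=1+2\sum_{j\ge1}\tau^jT_j(t)
\end{equation*}
follows by summing two geometric series in $\tau z$ and $\tau\bar z$. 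Combining the two displays gives
\begin{equation*}
f(t)=\frac{2\tau}{\tau^2-1}\Bigl(1+2\sum_{j\ge1}\tau^jT_j(t)\Bigr).
\end{equation*}
The series converges uniformly on $[-1,1]$.

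\textbf{Step 2: error as the real part of a Blaschke-type expression.} Comparing this series with the claimed $p^\ast_n$, the error $f-p^\ast_n$ consists of the tail $\sum_{j\ge n}$ of the series together with the correction term $\frac{4\tau^{n+1}}{(\tau^2-1)^2}T_n$. (In the process I would reconcile the $j=0$ coefficient in the formula for $p^\ast_n$, since the constant term in Step 1 carries half the weight of the others.) On the unit circle, $\sum_{j\ge n}\tau^jT_j=\mathrm{Re}\,\bigl(\tau^nz^n/(1-\tau z)\bigr)$ and $T_n=\mathrm{Re}\,z^n$. The key algebraic step is to combine these into a single expression
\begin{equation*}
f(t)-p^\ast_n(t)=K\,\mathrm{Re}\Bigl(z^{n}\,\frac{z-\tau}{1-\tau z}\Bigr)
\end{equation*}
for an explicit real constant $K$, possibly up to an extra power of $z$ or a conjugation that I would sort out in the computation.

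\textbf{Step 3: equioscillation and the error value.} For $|z|=1$ the factor $(z-\tau)/(1-\tau z)$ has modulus one. Writing it as $e^{i\phi(\theta)}$, the error equals $K\cos\bigl(n\theta+\phi(\theta)\bigr)$, where $\phi$ increases monotonically from $0$ to $\pi$ as $\theta$ runs over $[0,\pi]$. The phase $n\theta+\phi(\theta)$ therefore sweeps $[0,(n+1)\pi]$, so $|f-p^\ast_n|$ attains $|K|$ with alternating signs at $n+2$ points of $[-1,1]$. The alternation theorem then shows that $p^\ast_n$ is optimal and that the minimax error is $|K|$. Identifying $|K|=4|\tau|^{n+2}/(\tau^2-1)^2$ completes the proof.

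\textbf{Main obstacle.} The delicate part is the bookkeeping in Step 2. The coefficient of $T_n$ must be chosen so that the tail combines exactly into $\mathrm{Re}\bigl(z^n (z-\tau)/(1-\tau z)\bigr)$; equivalently, one writes $\frac{1}{1-\tau z}=\frac{1}{1-\tau^2}\bigl(\frac{z-\tau}{1-\tau z}\cdot\tau+1\bigr)$-type partial fractions and absorbs the leftover multiple of $\mathrm{Re}\,z^n$ into $p^\ast_n$. Fixing the exact powers of $\tau$ and the sign of $K$ needs care, and I would check the result against the case $n=0$.
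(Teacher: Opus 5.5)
Your proposal is correct and follows the classical route the paper relies on. The paper gives no proof of its own; it only cites Rivlin and Jokar--Mehri for this Chebyshev-series-based result, and your argument (expand $1/(t-c)$ in Chebyshev polynomials, rewrite $f-p^\ast_n$ as a constant times $\mathrm{Re}\bigl(z^n\frac{z-\tau}{1-\tau z}\bigr)$, conclude by equioscillation at $n+2$ points) is that standard argument.

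The bookkeeping you flag closes with no extra power of $z$ and no conjugation. The lemma's leading term $-2\tau/(\tau^2-1)$ makes $f-p^\ast_n$ exactly the full-weight tail $\frac{4\tau}{\tau^2-1}\sum_{j\ge n}\tau^jT_j$ plus $\frac{4\tau^{n+1}}{(\tau^2-1)^2}T_n$ for every $n\ge 0$. Your partial-fraction identity then gives $f-p^\ast_n=-\frac{4\tau^{n+2}}{(\tau^2-1)^2}\,\mathrm{Re}\bigl(z^{n}\frac{z-\tau}{1-\tau z}\bigr)$, so $|K|$ is precisely the claimed minimax error.
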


\begin{remark}
    By some algebraic manipulation it can be shown that the error can equivalently be written as
    \begin{equation}\label{minimax_error}
   \left\|f - p^\ast_n  \right\|_{L^\infty(-1,1)} = \frac{(c-\sqrt{c^2-1})^{n}}{(c^2 -1)^2}.
\end{equation}
\end{remark}

%%%%%%%%%%%%%%%%%%%%%%%%%%%%%%%%%%%%%%%
\end{document}